  \newcommand{\miniscule}{\@setfontsize\miniscule{4}{5}}
  \newcommand{\miniscule}{\@setfontsize\miniscule{5}{6}}
  \newcommand{\miniscule}{\@setfontsize\miniscule{5}{6.7}}
\let\mcnewpage=\newpage
\newcommand{\TrickSupertabularIntoMulticols}{%
  \renewcommand\newpage{%
    \if@firstcolumn
      \hrule width\linewidth height0pt
      \columnbreak
    \else
      \mcnewpage
    \fi
  }%
}
\numberwithin{equation}{section}
\newtheorem{prop}{Proposition}[section]
\newtheorem{theorem}[prop]{Theorem}
\newtheorem{cor}[prop]{Corollary}
\theoremstyle{definition}
\newtheorem{defn}[prop]{Definition}
\newtheorem{example}[prop]{Example}
\theoremstyle{remark}
\newtheorem{rem}[prop]{Remark}
\newcommand{\A}{\mathscr{A}}
\newcommand{\C}{\mathbb{C}}
\newcommand{\F}{\mathbb{F}}
\newcommand{\G}{\mathcal{G}}
\renewcommand{\H}{\mathcal{H}}
\newcommand{\HS}{\mathcal{HS}}
\newcommand{\R}{\mathbb{R}}
\newcommand{\T}{\mathbb{T}}
\newcommand{\Z}{\mathbb{Z}}
\newcommand{\Norm}[1]{\left\Vert #1 \right\Vert}
\renewcommand{\subset}{\subseteq}
\renewcommand{\supset}{\supseteq}
\DeclareMathOperator{\Aut}{Aut}
\DeclareMathOperator{\Inn}{Inn}
\DeclareMathOperator{\Mat}{Mat}
\DeclareMathOperator{\mult}{mult}
\DeclareMathOperator{\spn}{span}
\DeclareMathOperator{\SL}{SL}
\DeclareMathOperator{\Sp}{Sp}
\DeclareMathOperator{\tr}{tr}
\keywords{Equiangular tight frames, Schurian association schemes, Gelfand pairs, Heisenberg group}
\subjclass[2010]{Primary: 20B99, 42C15, 52C99 Secondary: 20C15, 94C30}
\begin{document}

\title{Optimal line packings from finite group actions}

\author{Joseph W.\ Iverson}
\address{Department of Mathematics, University of Maryland, College Park, MD 20742}
\email{jiverson@math.umd.edu}

\author{John Jasper}
\address{Department of Mathematics and Statistics, South Dakota State University, Brookings, SD 57007}
\email{john.jasper@sdstate.edu}

\author{Dustin G.\ Mixon}
\address{Department of Mathematics, The Ohio State University, Columbus, OH 43210}
\email{mixon.23@osu.edu}

\date{\today}
\maketitle

\begin{abstract}
We provide a general program for finding nice arrangements of points in real or complex projective space from transitive actions of finite groups.
In many cases, these arrangements are optimal in the sense of maximizing the minimum distance.
We introduce our program in terms of general Schurian association schemes before focusing on the special case of Gelfand pairs.
Notably, our program unifies a variety of existing packings with heretofore disparate constructions.
In addition, we leverage our program to construct the first known infinite family of equiangular lines with Heisenberg symmetry.
\end{abstract}

\section{Introduction} \label{sec:intro}

We consider the fundamental problem of packing points in real or complex projective space so that the minimum distance is maximized.
A famous instance of this problem is the \emph{Tammes problem}~\cite{Tammes:30}, which concerns the packing of points in $\mathbb{S}^2\cong\mathbb{C}\mathbf{P}^1$.
In this space, the optimal packing of 13 points was the subject of a celebrated argument between Newton and Gregory~\cite{Newton:66}.
Recently, the general problem of packing in projective space has received renewed attention due to its applications in communication, coding, and quantum information theory~\cite{SH,Z}.
In fact, the last few years produced a multitude of disparate constructions of optimal packings~\cite{fickus2012steiner,bodmann2016achieving,fickus2016equiangular,fickus2018tremain,fickus2016polyphase,casazza2016geometric,IJM,fickus2017hadamard,appleby2017constructing,FJM17} (see~\cite{FMTable} for a living survey), leaving one yearning for some sort of unified theory.
This paper provides a modest step in that direction by identifying a fruitful correspondence with transitive actions of finite groups.

Points in projective space correspond to one-dimensional subspaces (lines) of some real or complex vector space, and for convenience, we represent each line with a spanning unit vector.
Our packing problem then amounts to finding unit vectors $\{\phi_i\}_{i=1}^n$ that minimize \emph{coherence}, defined by
\[
\max_{1 \leq i \neq j \leq n} |\langle \phi_i,\phi_j\rangle|.
\]
To minimize coherence, it suffices to achieve equality in some known lower bound, such as the Welch, orthoplex or Levenstein bounds~\cite{Wel74,ConwayHS:96,levenshtein1982bounds}.
For each of these bounds, there exist specific cases in which equality is achievable.
Interestingly, a packing achieves equality in the Welch bound precisely when the Gram matrix $\begin{bmatrix} \langle \phi_j,\phi_i\rangle \end{bmatrix}_{i,j=1}^n$ is a scalar multiple of a projection with off-diagonal entries of constant modulus~\cite{SH}.
Such packings are known as \emph{equiangular tight frames (ETFs)}.

Conway, Hardin and Sloane~\cite{ConwayHS:96} were perhaps the first to observe that highly symmetric arrangements of lines are frequently strong competitors in the packing problem.
Packings exhibiting abelian symmetry are known as \emph{harmonic frames}, and harmonic ETFs are constructed using combinatorial objects known as difference sets~\cite{SH,XZG}.
Optimal packings of $d^2$ points in $\mathbb{C}\mathbf{P}^{d-1}$ are conjectured to be ETFs with Heisenberg symmetry for every $d$, which correspond to desirable measurement ensembles for quantum state estimation~\cite{Z}.
As a precursor to the present paper, the authors recently used group schemes to construct the first known infinite family of ETFs with nonabelian symmetry~\cite{IJM}.

The main idea of this paper is illustrated in Figure~\ref{fig:big_picture}.
Every transitive action of a finite group determines a \emph{Schurian association scheme}, which in turn produces a collection of distinguished projections through its \emph{spherical functions}. Each projection inherits symmetries from the group action, and has a (small) number of distinct entries bounded by the dimension of the scheme's adjacency algebra.
Viewing each projection as a Gram matrix then produces a collection of vectors that will often generate an optimal line packing.
In particular, the packing might be an ETF since it necessarily has a small number of angles.

While each of these individual relationships is known, the entire chain suggests a useful new discovery tool for researchers.
For example, one may systematically search through finite group actions in GAP~\cite{gap} to find worthy line packings.
The authors used this program to find an ETF exhibiting Heisenberg symmetry, and then generalized it to the first known infinite family with such symmetry.
While these ETFs are not exactly the packings desired in quantum information theory, we expect there to be some sort of relationship (as in~\cite{appleby2017dimension}), and we leave this for future investigation.

The following section covers preliminary information about Schurian association schemes and in particular, the commutative instances corresponding to \emph{Gelfand pairs}. 
In the theory of Lie groups, Gelfand pairs are used widely for the reproducing properties of their spherical functions on homogeneous spaces \cite{Hel00}.
However, Gelfand pairs of finite groups appear to have received comparatively little attention from the frame theory community.
As far as we are aware, the current article represents the first systematic attempt to mine Gelfand pairs as sources of finite frames.
Section~3 then discusses the packings that arise from Schurian schemes, known as \emph{homogeneous frames}. We illustrate the theory with examples in Section~4.
Sections~5 and~6 then explain how to leverage the chain of relationships illustrated in Figure~\ref{fig:big_picture} to produce an infinite family of ETFs with Heisenberg symmetry. A nontrivial consequence of Zauner's conjecture \cite{Z} is that an infinite family like this exists, and so our construction gives theoretical evidence in favor of that conjecture.

\begin{figure}[t]
\begin{center}
\begin{tikzpicture}
\draw[thick,->] (2.5,-1) -- (4,-1);
\draw[thick,<-] (0,-0.3) -- (0,1);
\draw[thick,->] (6.5,0.3) -- (6.5,1);
\node[align=center] at (0,3.3) {\emph{Discrete world}};
\node[align=center] at (6.5,3.3) {\emph{Continuous world}};
\node[draw,text width=3.5cm,align=center] at (0,2) {transitive actions\\ of finite groups};
\node[draw,text width=3.5cm,align=center] at (0,-1) {Schurian schemes};
\node[draw,text width=3.5cm,align=center] at (6.5,2) {optimal line\\ packings};
\node[draw,text width=3.5cm,align=center] at (6.5,-1) {projections with\\ symmetries and\\ few distinct entries};
\end{tikzpicture}
\end{center}
\caption{
Important chain of relationships in this paper.
}  \label{fig:big_picture}
\end{figure}
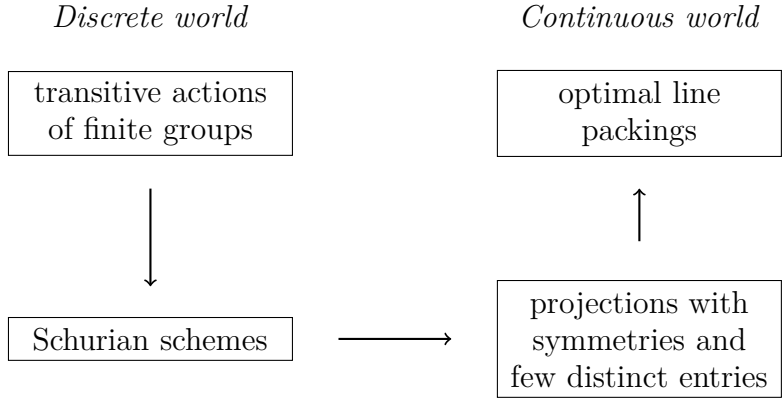

\section{Preliminaries} \label{sec:pre}

We begin by recalling the basic theory of frames, Schurian association schemes, and Gelfand pairs. See \cite{christensen2003introduction,CKP,BI,CSST} for more details.

\subsection{Frames}
Let $\H$ be a $d$-dimensional Hilbert space ($d < \infty$), either real or complex. A sequence of vectors $\Phi = \{ \phi_j \}_{j=1}^n$ in $\H$ is called a \emph{frame} if there are constants $0 < A \leq B < \infty$ such that
\[ A \Norm{ \psi }^2 \leq \sum_{j=1}^n | \langle \psi, \phi_j \rangle |^2 \leq B \Norm{ \psi }^2 \]
for all $\psi \in \H$. We call $A$ and $B$ the \emph{frame bounds}. When $A=B$, the frame is called \emph{tight}, and when $A = B = 1$, it is called \emph{Parseval}. In abuse of notation, we sometimes think of $\Phi$ as a short, fat matrix whose columns describe the frame vectors $\phi_1,\dotsc,\phi_n$. With this in mind, the \emph{Gram matrix} for $\Phi$ is the $n\times n$ matrix
\[ \Phi^* \Phi = \begin{bmatrix} \langle \phi_j, \phi_i \rangle \end{bmatrix}_{i,j=1}^n, \]
which is clearly positive semidefinite. To say that $\Phi$ is a tight frame for its span means precisely that $\Phi^* \Phi$ is a constant times a projection. Moreover, it is possible to recover $\Phi$ from its Gram matrix, up to a unitary equivalence. (For example, when $\Phi$ is Parseval we can just take the columns of $\Phi^* \Phi$ in their span.)
We say that Parseval frames $\Phi$ and $\Psi$ are \emph{Naimark complements} if $\Phi^*\Phi+\Psi^*\Psi=I$, meaning their Gram matrices project onto orthogonally complementary spaces.

In this paper, we want to find \emph{equiangular tight frames} (ETFs), which are tight frames with two more properties: (1) All of the frame vectors have the same (nonzero) length, and (2) The inner product $| \langle \phi_i, \phi_j \rangle |$ is constant across all pairs of distinct frame vectors $\phi_i \neq \phi_j$. When an ETF occurs, it can be rescaled so that its vectors are unit norm with coherence matching the Welch bound
\[ \mu(\Phi):= \max_{1 \leq i \neq j \leq n} | \langle \phi_i, \phi_j \rangle | \geq \sqrt{ \frac{n-d}{d(n-1)} }. \]
In particular, an ETF is an optimal line packing.

Our strategy is to hunt for ETFs via their Gram matrices, which are recognizable from three features: (1) They are constant multiples of projections, (2) They have constant diagonal, and (3) They have constant modulus off the diagonal. We are going to look in particular for Gram matrices that lie in the adjacency algebras of \emph{Schurian association schemes}, described below.

\subsection{Schurian schemes}
Let $G$ be a finite group acting transitively on a finite set $X$, from the left. This action determines a matrix algebra in the following way. Let $G$ act on $X\times X$ by setting $g\cdot (x,y) = (g\cdot x, g\cdot y)$ for $g\in G$ and $x,y\in X$, and let $R_0,\dotsc,R_c \subset X \times X$ be the orbits of this action, indexed in such a way that $R_0 = \{(x,x) : x \in X\}$. We can express each orbit $R_i$ as an $X\times X$ matrix $A_i$ whose entries are given by
\begin{equation} \label{eq:Ai}
(A_i)_{x,y} = \begin{cases}
1, & \text{if }(x,y) \in R_i; \\
0, & \text{otherwise.}
\end{cases}
\end{equation}
Then $A_0 + \dotsb + A_c = J$, the matrix of all ones, and it turns out that $\A := \spn\{A_0,\dotsc,A_c\}$ is a complex $*$-algebra under the usual matrix multiplication. In other words, $A_0,\dotsc,A_c$ form a (possibly noncommutative) \emph{association scheme}. Schemes of this type are called \emph{Schurian}, or \emph{group-case}. We call $A_0,\dotsc,A_c$ the \emph{adjacency matrices} of the scheme, and $\A$ the \emph{adjacency algebra}; it consists of all complex $X\times X$ matrices $M$ with the property that $M_{g\cdot x, g\cdot y} = M_{x,y}$ for all $g\in G$ and $x,y \in X$. In other words, $\A$ is the $*$-algebra of all \emph{$G$-stable} matrices.

For Schurian schemes, the adjacency algebra has another description in terms of representation theory. Let $L^2(X)$ be the space of complex-valued functions on $X$, with the inner product
\[ \langle f, g \rangle_{L^2(X)} = \sum_{x\in X} f(x) \overline{g(x)} \qquad \left(f,g \in L^2(X)\right). \]
Using the canonical basis of point masses in $L^2(X)$, we can think of our adjacency matrices as linear operators $A_0,\dotsc,A_c \in B(L^2(X))$. The action of $G$ on $X$ produces a unitary representation $\lambda \colon G \to U(L^2(X))$,
\[ [\lambda(g)f](x) = f(g^{-1}\cdot x) \qquad \left(g \in G,\, f \in L^2(X),\, x \in X\right), \]
and the adjacency algebra $\A$ coincides with its commutant
\[ \mathcal{C}(\lambda) = \{ T \in B(L^2(X)) : T\lambda(g) = \lambda(g)T \text{ for all }g\in G\}. \]

For a third realization of the adjacency algebra, fix a point $x_0 \in X$ and let $H = G_{x_0}$ be its stabilizer in $G$. As a $G$-set, $X$ is isomorphic to $G/H$ with the usual action on the left. For each $i=0,\dotsc,c$, there is a unique double coset
\[ H a_i H = \{ h a_i h' : h,h' \in H\} \]
for $H$ in $G$ such that
\begin{equation} \label{eq:Ai2}
(A_i)_{g\cdot x_0, h\cdot x_0} = \begin{cases}
1, & \text{if }h^{-1}g \in Ha_i H; \\
0, & \text{otherwise.}
\end{cases}
\end{equation}
The adjacency matrices are in one-to-one correspondence with the double cosets through the mapping $A_i \mapsto Ha_iH$. Moreover, for any $A\in \A$, the \emph{convolution kernel} $\psi_A \colon G \to \C$ given by
\begin{equation} \label{eq:psiA}
\psi_A(g) = \frac{1}{|H|} A_{g\cdot x_0, x_0} \qquad (g\in G)
\end{equation}
belongs to the space
\[  L^2(H\backslash G / H) = \{ \psi \colon G \to \C : \psi(gh) = \psi(hg) = \psi(g) \text{ for all $g\in G, h \in H$} \} \]
of bi-$H$-invariant functions on $G$. This has the structure of a $*$-algebra with the usual convolution and involution,
\[ (\psi_1 * \psi_2)(g) = \sum_{h\in G} \psi_1(h) \psi_2(h^{-1} g) \qquad \text{and} \qquad \psi^*(g) = \overline{\psi(g^{-1})} \qquad (g \in G), \]
and if we set $\tilde{\psi}_A(g) = \psi_A(g^{-1})$, the mapping $A \mapsto \tilde{\psi}_A$ is a $*$-algebra isomorphism from $\A$ to $L^2(H\backslash G / H)$.

\begin{defn}
We call $(G,H)$ a \emph{Gelfand pair} when $\A = \mathcal{C}(\lambda) \cong L^2(H\backslash G / H)$ is commutative.
\end{defn}

This notion, too, has a representation-theoretic interpretation. Let $\chi_\lambda \colon G \to \C$ be the trace character of $\lambda$, namely
\[ \chi_\lambda(g) = \left| \{x\in X : g\cdot x = x\}\right| \qquad (g \in G), \]
and let
\[ \chi_\lambda = n_0 \chi_0 + \dotsb + n_r \chi_r \]
be its decomposition into irreducible characters $\chi_0,\dotsc,\chi_r$, with $n_j \geq 1$ for all $j$, and $\chi_i \neq \chi_j$ for $i \neq j$. Then $\A$ is commutative if and only if $n_j = 1$ for all $j=0,\dotsc,r$. In other words, $(G,H)$ is a Gelfand pair if and only if $\lambda$ is \emph{multiplicity free}. 

\subsection{Spherical functions} \label{subsec:sph}

Whether or not $\A$ is commutative, many of its projections can be constructed explicitly from the character table of $G$, as follows. Each of the constituent characters $\chi_j$ above determines a \emph{spherical function} $\omega_j \in L^2(H\backslash G / H)$ given by
\begin{equation} \label{eq:omj}
\omega_j(g) = \frac{1}{|H|} \sum_{h\in H} \chi_j(g^{-1} h) = \frac{1}{|HgH|} \sum_{h\in HgH} \overline{\chi_j(h)} \qquad (g \in G).
\end{equation}
Writing $m_j = \chi_j(1_G)$ for the degree of $\chi_j$, the matrix
\begin{equation} \label{eq:Ej}
E_j := \frac{m_j}{|X|}\sum_{i=0}^c \omega_j(a_i) A_i
\end{equation}
is orthogonal projection onto the isotypical component $V_j \subset L^2(X)$ corresponding to $\chi_j$. In other words, $V_j$ is the unique $\lambda$-invariant subspace of $L^2(X)$ on which the restriction of $\lambda$ has trace character $n_j \chi_j$. Since $V_j$ is $\lambda$-invariant, $E_j$ is an orthogonal projection in $\mathcal{C}(\lambda) = \A$ with rank $n_j m_j$. Moreover, $E_0+\dotsb+E_r = I$, and $E_i E_j = E_j E_i = 0$ whenever $i \neq j$. In particular, every sum $\sum_{j\in D} E_j$ for $D\subset \{0,\dotsc,r\}$ is an orthogonal projection in $\A$. If it happens that $\A$ is commutative, then $E_0,\dotsc,E_c$ form the basis of primitive idempotents promised by the spectral theorem, and every projection in $\A$ takes the form just described. We summarize these results below.

\begin{prop} \label{prop:GD}
Every subset $D \subset \{0,\dotsc,r\}$ produces an orthogonal projection $\mathcal{G}_D = \sum_{j \in D} E_j$ in $\A$ with entries
\begin{equation}\label{eq:GD}
(\mathcal{G}_D)_{g\cdot x_0, h \cdot x_0} = \frac{1}{|X|}\sum_{j\in D} m_j \omega_j(h^{-1} g) \qquad (g,h \in G).
\end{equation}
When $(G,H)$ is a Gelfand pair, every orthogonal projection in $\A$ takes this form.
\end{prop}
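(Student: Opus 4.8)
The plan is to establish the three claims in sequence: that $\mathcal{G}_D=\sum_{j\in D}E_j$ is an orthogonal projection in $\A$, that its entries satisfy \eqref{eq:GD}, and that in the Gelfand case every orthogonal projection in $\A$ arises this way.

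For the first claim I would rely entirely on the properties of the $E_j$ recalled before the statement. Each $E_j$ is a self-adjoint idempotent in $\A$ with $E_iE_j=0$ for $i\neq j$, so expanding $\mathcal{G}_D^{\,2}=\sum_{j,k\in D}E_jE_k$ and discarding the vanishing cross terms leaves $\sum_{j\in D}E_j=\mathcal{G}_D$; self-adjointness of each summand gives $\mathcal{G}_D^*=\mathcal{G}_D$, and membership in $\A$ is immediate since $\A$ is a linear space containing every $E_j$. This step is routine.

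For the entry formula I would substitute the definition \eqref{eq:Ej} of each $E_j$, interchange the two finite sums, and collect coefficients to write $\mathcal{G}_D=\tfrac{1}{|X|}\sum_{i=0}^{c}\bigl(\sum_{j\in D}m_j\,\omega_j(a_i)\bigr)A_i$. Reading off the entry at $(g\cdot x_0,h\cdot x_0)$ and applying \eqref{eq:Ai2}, I would note that the double cosets partition $G$, so $h^{-1}g$ lies in exactly one coset $Ha_iH$; only the corresponding $A_i$ contributes a $1$, while every other adjacency matrix vanishes at this index. It then remains to replace $\omega_j(a_i)$ by $\omega_j(h^{-1}g)$, which is legitimate because each spherical function $\omega_j$ belongs to $L^2(H\backslash G/H)$ and is hence constant on double cosets. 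The same bi-$H$-invariance simultaneously confirms that the right-hand side of \eqref{eq:GD} is well defined as a function of the pair of points $g\cdot x_0,\,h\cdot x_0$ and not of the chosen representatives $g,h$.

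The substantive part is the converse for a Gelfand pair, and I expect the dimension bookkeeping to be the main obstacle. Since $\A$ is then a finite-dimensional commutative $*$-algebra, it is semisimple and isomorphic to $\C^{k}$, in which the self-adjoint idempotents are precisely the subset sums of the $k$ primitive idempotents. The goal is to identify $\{E_0,\dotsc,E_r\}$ as exactly this set of primitives. They are nonzero (each has rank $n_jm_j\geq 1$), mutually orthogonal, and sum to $I$, so it suffices to count them. I would do this by comparing the two descriptions of $\A$: on one hand $\dim\A=c+1$, the number of double cosets; on the other, as the commutant $\mathcal{C}(\lambda)$ of $\lambda\cong\bigoplus_j n_jV_j$, standard representation theory gives $\dim\A=\sum_{j=0}^r n_j^2$, which collapses to $r+1$ precisely because commutativity forces every $n_j=1$. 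Hence $k=r+1=c+1$; a family of $r+1=k$ nonzero mutually orthogonal idempotents in $\C^{k}$ is necessarily the full set of primitive idempotents, so every orthogonal projection in $\A$ equals $\sum_{j\in D}E_j=\mathcal{G}_D$ for some $D\subset\{0,\dotsc,r\}$.
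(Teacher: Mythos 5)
Your proposal is correct and follows essentially the same route as the paper, which presents Proposition~\ref{prop:GD} as a summary of the facts established just before it: the $E_j$ of \eqref{eq:Ej} are mutually orthogonal projections onto the isotypical components, the entry formula comes from combining \eqref{eq:Ai2} with \eqref{eq:Ej} and the bi-$H$-invariance of the spherical functions, and in the Gelfand case the spectral theorem identifies $E_0,\dotsc,E_r$ as the primitive idempotents of the commutative $*$-algebra $\A$. Your explicit dimension count $\dim\A = \sum_j n_j^2 = r+1 = c+1$ is a welcome filling-in of the detail the paper delegates to the phrase ``promised by the spectral theorem,'' but it is elaboration of the same argument, not a different one.
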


We have written code for the computer program GAP \cite{gap} to compute the spherical functions associated with any transitive group action, and to produce the corresponding projections \cite{github2}.

The spherical functions have another description in terms of invariant vectors. If $\pi_j \colon G \to U(\H_j)$ is any unitary representation of $G$ affording $\chi_j$ as its trace character, then the space
\[ \H_j^H = \{ v \in \H_j : \pi_j(h)v = v \text{ for all $h\in H$} \} \]
of $H$-stable vectors in $\H_j$ has dimension $n_j$. If $u_1,\dotsc,u_{n_j}$ is an orthonormal basis for $\H_j^H$, then
\begin{equation} \label{eq:spd}
\omega_j(g) = \sum_{i=1}^{n_j} \langle u_i, \pi_j(g) u_i \rangle \qquad (g \in G).
\end{equation}

\subsection{Examples}

\begin{example} \label{ex:J}
$G$ acts trivially on the subspace of constant functions in $L^2(X)$, so one of the constituents of $\chi_\lambda$, say $\chi_0$, is the trivial character $\chi_0(g) \equiv 1$. The corresponding spherical function $\omega_0$ is constantly equal to $1$. Hence,
\[ E_0 = \frac{1}{|X|} \sum_{i=0}^c A_i = \frac{1}{|X|} J, \]
which is indeed orthogonal projection onto the subspace of constant functions.
\end{example}

\begin{example} \label{ex:hrm}
Let $G$ be any finite group, acting on $X=G$ by left translation. The adjacency algebra $\A$ consists of all \emph{$G$-circulant} matrices. Since the stabilizer of any point in $G$ is the trivial group $H=\{1_G\}$, $\A$ is isomorphic to the $*$-algebra $L^2(G)$. Thus, we have a Gelfand pair if and only if $G$ is abelian. In that case, the spherical functions are given by the Pontryagin dual group $\hat{G}$, which consists of all homomorphisms $\alpha \colon G \to \T$, under the operation of pointwise multiplication. Indeed, the permutation representation $\lambda \colon G \to L^2(X)$ is the left regular representation of $G$, and the Peter-Weyl Theorem tells us that every character $\alpha \in \hat{G}$ appears as a constituent of $\chi_\rho$. From \eqref{eq:omj}, we see that the spherical function corresponding to $\alpha \in \hat{G}$ is $\overline{\alpha} = \alpha^{-1}$. Any choice of subset $D \subset \hat{G}$ prescribes, via Proposition~\ref{prop:GD}, a $G\times G$ orthogonal projection $\mathcal{G}_D$ with entries
\begin{equation} \label{eq:hrm}
(\G_D)_{g,h} = \frac{1}{|G|} \sum_{\alpha \in D} \alpha(h) \overline{\alpha(g)} \qquad (g,h \in G).
\end{equation}
\end{example}

\begin{example} \label{ex:gs}
Let $K$ be any finite group (possibly nonabelian), and let $G = K \times K$, acting on $X=K$ by the formula $(g,h)\cdot k = gkh^{-1}$ for $g,h,k \in K$. The orbits of the corresponding action on $X\times X$ are indexed by the conjugacy classes $\mathcal{C}_0,\dotsc,\mathcal{C}_c$ of $K$, and take the form
\[ R_i = \{(gh,h) : g \in \mathcal{C}_i, h \in K\}. \]
If $L_g = \begin{bmatrix} \delta_{gh,h} \end{bmatrix}_{g,h \in K}$ is the $K\times K$ matrix representation for left translation by $g\in K$ on $L^2(K)$, it follows that
\[ A_i = \sum_{g\in \mathcal{C}_i} L_g. \]
The adjacency algebra $\A$ is the center of the group algebra of $K$-circulant matrices described in Example~\ref{ex:hrm}. The stabilizer of the point $1_K \in K$ is the diagonal 
\[ \Delta(K) = \{(g,g) : g \in K\}. \]
Since $\A$ is commutative, $(K\times K, \Delta(K))$ is a Gelfand pair.

As in the abelian case, primitive idempotents in $\A$ are indexed by the set $\hat{K}$ of irreducible characters of $K$, with a character $\chi \in \hat{K}$ corresponding to the matrix
\[ E_\chi := \frac{\chi(1)}{|K|} \begin{bmatrix} \chi(g^{-1} h) \end{bmatrix}_{g,h \in K}. \]
See, for instance, \cite[Thm.~10.6.1]{Godsil}. Once again, projections in $\A$ are uniquely determined by subsets $D \subset \hat{K}$, through the formula
\begin{equation} \label{eq:cgf}
(\G_D)_{g,h} = \frac{1}{|K|} \sum_{\chi \in D} \chi(1) \chi(g^{-1} h) \qquad (g,h \in G).
\end{equation}
\end{example}

This completes our review of background material.

\section{Homogeneous frames and Schurian schemes}

In general, we are interested in association schemes primarily as sources of finite frames, represented by their Gram matrices in the corresponding adjacency algebra.
Any positive semidefinite matrix can be viewed as the Gram matrix of some frame, and the resulting frame is Parseval if and only if the Gram matrix is a projection.
By the spectral theorem, any commutative $*$-algebra of square matrices therefore determines a finite set of Parseval frames through its projections.
In the case of association schemes, the resulting frames have few inner products---no more than the number of adjacency matrices.
In this sense, association schemes may be well suited for the construction of low-coherence tight frames.

In fact, the adjacency algebra of any association scheme of $n\times n$ matrices contains the Gram matrices of three trivial ETFs: an orthonormal basis for $\C^n$ (represented by the identity matrix $I$), $n$ identical vectors in $\C^1$ (represented by $\frac{1}{n} J$), and the $n$-vector \emph{simplex} in $\C^{n-1}$ (represented by $I - \frac{1}{n}J$).
Note that these last two examples are Naimark complements of each other, and in general, adjacency algebras are closed under such complementation.

Among association schemes, the Schurian schemes are particularly attractive for two reasons.
First, they provide a channel from the discrete world of finite group actions to the continuous setting of finite frames in $\C^d$.
Second, the spherical functions make it easy to compute projections from a character table. 
Here, the identity matrix corresponds to $D=\{0,\dotsc,r\}$ in Proposition~\ref{prop:GD}, whereas projection onto the all-ones vector comes from the trivial action of $G$ on constant functions in $L^2(X)$, as in Example~\ref{ex:J}.
In addition, the Naimark complement corresponds to the set complement of $D$.
Now that we have established how trivial ETFs naturally arise from Schurian schemes, we are ready to pursue nontrivial constructions.

This section is devoted to the general theory of frames whose Gram matrices have Schurian structure. We begin by relating symmetry in the Gram matrix to symmetry in the frame vectors themselves. Subsection~\ref{subsec:homCon} continues with a series of techniques to identify group structure in a given frame. In subsection~\ref{subsec:reg}, we explain how this machinery distinguishes a small class of frames associated with a \emph{regular subgroup}. Finally, subsection~\ref{subsec:proj} introduces an important technical tool for squeezing additional line packings out of a given scheme. Illustrative examples are sprinkled throughout this section; more substantial examples appear in Sections~4 and~6.

\subsection{Homogeneous frames} \label{subsec:hom}

\begin{defn}
Let $G$ be a finite group, and let $\rho \colon G \to U(\H)$ be a unitary representation. Any frame of the form $\Phi = \{ \rho(g)v \}_{g\in G}$, with $v\in \H$, is called a \emph{group frame}, or more specifically, a \emph{$G$-frame}. If there is a subgroup $H \subset G$ such that $\rho(h)v = v$ for all $h \in H$, then we can reduce $\Phi$ to form a new frame, $\Phi' = \{ \rho(g)v \}_{gH \in G/H}$. We call $\Phi'$ a \emph{homogeneous frame}, or if we wish to emphasize the particular groups involved, a \emph{$(G,H)$-frame}.
\end{defn}

\begin{theorem} \label{thm:hgf}
Let $G$ be a finite group acting transitively on a set $X$, and let $\A$ be the adjacency algebra of all $G$-stable, $X \times X$ matrices. After changing indices through any $G$-set isomorphism $X \cong G/H$ for a subgroup $H\subset G$, the positive semidefinite matrices in $\A$ are precisely the Gram matrices for $(G,H)$-frames.
\end{theorem}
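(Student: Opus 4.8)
The plan is to prove the asserted equality of sets by establishing the two inclusions separately; the substantive half is an application of the positive square root inside $\A$. Throughout I identify $X$ with $G/H$, write $e_{gH}\in L^2(X)$ for the point mass at the coset $gH$, and use two facts from the preliminaries: $\lambda(\gamma)e_{gH}=e_{\gamma gH}$, and $\A=\mathcal{C}(\lambda)$ is exactly the set of $X\times X$ matrices commuting with the permutation representation $\lambda$.

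For the easy inclusion, I would start from a $(G,H)$-frame $\Phi'=\{\rho(g)v\}_{gH\in G/H}$, where $\rho(h)v=v$ for all $h\in H$. Well-definedness on cosets is immediate, since $\rho(gh)v=\rho(g)\rho(h)v=\rho(g)v$. The Gram entries are $(\Phi'^*\Phi')_{gH,g'H}=\langle\rho(g')v,\rho(g)v\rangle$, and replacing $(g,g')$ by $(\gamma g,\gamma g')$ leaves this unchanged because $\langle\rho(\gamma)\rho(g')v,\rho(\gamma)\rho(g)v\rangle=\langle\rho(g')v,\rho(g)v\rangle$ by unitarity of $\rho(\gamma)$. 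Thus the Gram matrix is $G$-stable, hence lies in $\A$, and it is positive semidefinite as any Gram matrix is.

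For the reverse inclusion I would take a positive semidefinite $M\in\A$ and set $B=M^{1/2}$, the positive square root. The key point is that $B\in\A$ as well: since $\A$ is a finite-dimensional $*$-algebra containing $I$, it is closed under the continuous functional calculus of its self-adjoint elements (equivalently, $B$ is a polynomial in $M$, so it commutes with every $\lambda(\gamma)$). Put $v=Be_{H}$, the column of $B$ at the identity coset. Then the commutation $B\lambda(g)=\lambda(g)B$ gives $Be_{gH}=B\lambda(g)e_{H}=\lambda(g)v$, so the columns of $B$ are precisely the vectors $\lambda(g)v$ indexed by $gH\in G/H$; moreover $\lambda(h)v=Be_{hH}=Be_{H}=v$ for every $h\in H$, so these columns form a $(G,H)$-frame for the representation $\rho=\lambda$. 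Since $B$ is the matrix whose columns are these frame vectors, its associated Gram matrix is $B^*B=B^2=M$, which is what we wanted.

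The routine verifications --- coset well-definedness and the one-line $G$-stability computation --- carry no difficulty. The single step that deserves genuine care is the claim that the positive square root remains inside $\A$; I would justify it by functional calculus, or concretely by Lagrange interpolation producing a polynomial $p$ with $p(M)=M^{1/2}$, which manifestly commutes with whatever $M$ commutes with. Everything else is bookkeeping built on the identity $\lambda(\gamma)e_{gH}=e_{\gamma gH}$.
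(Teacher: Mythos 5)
Your proof is correct, and in the substantive direction it takes a genuinely different route from the paper's. Both arguments hinge on the same key fact---a positive element of a finite-dimensional $*$-algebra has its positive square root inside the algebra---but you apply it directly in the matrix algebra $\A = \mathcal{C}(\lambda)$, whereas the paper first transports $\G$ to the isomorphic convolution algebra $L^2(H\backslash G/H)$, takes the square root $\psi_\G^{1/2}$ there, sets $v = |H|^{1/2}\psi_\G^{1/2}$, and realizes the frame as left translates of $v$ inside $L^2(G)$, recovering the Gram entries via a convolution computation $\langle \lambda(h)v,\lambda(g)v\rangle = |H|\,\psi_\G(h^{-1}g)$. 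Your version---read the frame vectors off as the columns of $B = M^{1/2}$, so that the Gram matrix is $B^*B = B^2 = M$ on the nose---shortcuts that computation entirely and is in the spirit of the remark in the preliminaries that a Parseval frame can be recovered from its Gram matrix by taking columns; it buys brevity and makes the representation concrete (it is $\lambda$ itself), while the paper's detour buys the explicit isomorphism $\A \cong L^2(H\backslash G/H)$ in a form it reuses later (notably in Theorem~\ref{thm:milk} and Corollary~\ref{cor:milk}). One small imprecision to repair: as written, ``$\rho = \lambda$'' on all of $L^2(X)$ yields a frame only when $M$ is invertible, since a frame must satisfy the lower frame bound on its ambient space; you should take $\H = \ran B$ (the span of the columns, which equals $\ran M$) and $\rho = \lambda|_{\H}$, and this restriction is legitimate precisely because $B \in \mathcal{C}(\lambda)$ makes $\ran B$ a $\lambda$-invariant subspace---an observation already contained in your commutation argument, so this is a one-line fix rather than a gap. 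The forward inclusion is essentially identical to the paper's.
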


\begin{proof}
First, let $\Phi' = \{ \rho(g) v \}_{gH \in G/H}$ be a $(G,H)$-frame. Then its Gram matrix $\G$ is positive semidefinite, with entries given by
\[ \G_{gH,hH} = \langle \rho(h) v, \rho(g) v \rangle \]
for $gH,hH \in G/H$. For any $k\in G$, we have
\[ \G_{kgH, khH} = \langle \rho(kh) v, \rho(kg) v \rangle = \langle \rho(k) \rho(y), \rho(k) \rho(g) v \rangle = \G_{gH,hH}, \]
since $\rho(k)$ is unitary. Hence, $\G \in \A$.

In the other direction, if $\G$ is any positive semidefinite matrix in $\A$, then its convolution kernel $\psi_\G = \tilde{\psi}_{\G^T}$ is a positive element of the finite-dimensional $*$-algebra $L^2(H \backslash G / H)$, so it has a unique positive square root $\psi_\G^{1/2} \in L^2(H\backslash G / H)$. For each $g \in G$, let $L_g \colon L^2(G) \to L^2(G)$ be the translation operator given by $(L_g \psi)(h) = \psi(g^{-1}h)$. We define $v = |H|^{1/2} \psi_\G^{1/2}$ and
\[ \H = \spn\{ L_g v : g \in G \} \subset L^2(G), \]
and let $\lambda\colon G \to U(\H)$ describe left translation on $\H$. As a finite spanning set, $\Phi:=\{ \lambda(g) v \}_{g\in G}$ is a frame for $\H$. Since $v \in L^2(H \backslash G / H)$ is $H$-stable, we obtain a $(G,H)$-frame $\Phi' := \{ \lambda(g) v \}_{gH \in G/H}$.

It remains to show that $\G$ is the Gram matrix for $\Phi'$. For any $g,h \in G$, we have
\[ \langle \lambda(h) v, \lambda(g) v \rangle = |H| \langle \psi_\G^{1/2}, \lambda(h^{-1} g) \psi_\G^{1/2} \rangle = |H| \sum_{k\in G} \psi_\G^{1/2}(k) \overline{ \psi_\G^{1/2}(g^{-1} h k) }, \]
and since $\psi_\G^{1/2}$ is self-adjoint,
\[ \langle \lambda(h) v, \lambda(g) v \rangle = |H| \sum_{k\in G}\psi_\G^{1/2}(k)  \psi_\G^{1/2}(k^{-1} h^{-1} g) = |H| (\psi_\G^{1/2} * \psi_\G^{1/2})(h^{-1} g) = |H| \psi_\G(h^{-1} g). \]
By \eqref{eq:psiA}, $\langle \lambda(h) v, \lambda(g) v \rangle = \G_{h^{-1}gH,H} = \G_{gH,hH}$, as desired.
\end{proof}

\begin{example} \label{ex:gphg}
We now explain how to produce homogeneous frames with Gram matrices as in Proposition~\ref{prop:GD}. Following the notation of Section~\ref{sec:pre}, fix a subset $D \subset \{0,\dotsc,r\}$, and let $\rho = \bigoplus_{j\in D} \pi_j^{(n_j)}$, where $\pi_j^{(n_j)}$ denotes the direct sum of $n_j = \mult(\pi_j, \lambda)$ copies of $\pi_j$. For each $j \in D$, choose an orthonormal basis $u_1^{(j)},\dotsc,u_{n_j}^{(j)}$ for the space $\H_j^H$ of $H$-stable vectors in $\H_j$. Then let
\[ v = \frac{1}{\sqrt{|X|}} \left\{ \sqrt{m_j} u_i^{(j)} \right\}_{j \in D,\, 1 \leq i \leq n_j} \in \bigoplus_{j\in D} \H_j^{(n_j)}. \]
It is clearly stabilized by $H$. Comparing \eqref{eq:GD} and \eqref{eq:spd}, we see that $\Phi_D := \{ \rho(g) v\}_{gH \in G/H}$ is a $(G,H)$-frame with Gram matrix $\G_D$.

When $n_j = 1$ (for instance, when $(G,H)$ is a Gelfand pair), there is an easy way to find a spanning vector for $\H_j^H$. Indeed, we can start with any $v\in \H_j$, and then the vector
\[ \tilde{v}:=\sum_{h\in H} \pi_j(h) v \]
will be stabilized by $H$. As long as $v$ is not orthogonal to $\H_j^H$, $\tilde{v}$ will be nonzero. In particular, if we apply this procedure to an entire orthonormal basis for $\H_j$, then we are guaranteed to find at least one nonzero $H$-stable vector, which we can rescale and use as $u_1^{(j)}$.
\end{example}

\begin{example} \label{ex:hrm2}
Let $G$ be a finite group acting on itself by left translation, as in Example~\ref{ex:hrm}. In this case, Theorem~\ref{thm:hgf} tells us that positive semidefinite $G$-circulant matrices are precisely the Gram matrices of $G$-frames. For \emph{projections} in the adjacency algebra and \emph{tight} $G$-frames, this is a theorem of Vale and Waldron \cite{VW05}.

If $G$ is abelian, the projections in $\A$ are determined by subsets $D\subset \hat{G}$ as in \eqref{eq:hrm}. As the reader can verify, $\G_D$ is the Gram matrix of the \emph{harmonic frame} whose synthesis matrix is made by extracting the rows indexed by $D$ from the $\hat{G} \times G$ discrete Fourier transform (DFT) matrix 
\[ \mathcal{F} = \frac{1}{\sqrt{|G|}} \begin{bmatrix} \alpha(g) \end{bmatrix}_{\alpha \in \hat{G}, g \in G}. \]

Harmonic frames were an early and abundant source of ETFs \cite{SH,XZG,DF}. Conditions for equiangularity are completely understood in terms of combinatorics. Namely, $\G_D$ represents an ETF if and only if $D$ is a \emph{difference set} in $\hat{G}$, meaning there is a constant $\lambda$ such that
\[ \left| \{ (\alpha,\beta) \in D \times D : \alpha \beta^{-1} = \gamma \} \right| = \lambda \]
for all $\gamma \neq 1_{\hat{G}}$ in $\hat{G}$.
\end{example}

\begin{example} \label{ex:cgf}
Let $K$ be any finite group, and let $\A$ be the adjacency algebra of the conjugacy class scheme described in Example~\ref{ex:gs}. Tight frames with Gram matrices in $\A$ are called \emph{central} $K$-frames in \cite{VW08}. In this case, Theorem~\ref{thm:hgf} tells us that central $K$-frames are equivalent to tight $(K\times K, \Delta(K))$-frames. 

The authors investigated conditions for equiangularity in \cite{IJM}. Briefly, $\hat{K}$ has the structure of a \emph{hypergroup}, which is a probabilistic generalization of a group, and $\G_D$ represents an ETF if and only if $D\subset \hat{K}$ is a \emph{hyperdifference set}, which is a corresponding generalization of a difference set. An infinite family of ETFs with this form, with $K$ nonabelian, appears in~\cite{IJM}.
\end{example}

Any ETF made by one of the Gelfand pairs in Examples~\ref{ex:hrm} and \ref{ex:gs} is bound by an integrality constraint: if it consists of $n$ vectors in $\C^d$ or $\R^d$, then $n-1$ must divide ${d(d-1)}$. For the harmonic frames in Example~\ref{ex:hrm}, this is an easy consequence of the difference set condition. For the central group frames in Example~\ref{ex:gs}, this was proved in \cite{IJM}. As the following example demonstrates, ETFs from more general Gelfand pairs enjoy greater latitude.

\begin{example} \label{ex:7x28}
Let $G = AGL(\F_2^3) = \F_2^3 \rtimes GL(\F_2^3)$ be the affine linear group over $\F_2^3$, acting transitively on the set of lines in $\F_2^3$ in the natural way. Using the computer program GAP \cite{gap} with package FinInG \cite{fining}, we check that this action is multiplicity free. One of its primitive idempotents is the matrix in Figure~\ref{fig:7x28}, which describes an ETF of 28 vectors in $\R^7$. The exact code used to produce this example is available online \cite{github2}.

\begin{figure}
\begin{center}
\resizebox{.9\hsize}{!}{ 
$\displaystyle
\frac{1}{12}
\left[ \begin{array}{llllllllllllllllllllllllllll}
3 & - & - & - & + & - & + & - & - & + & - & + & + & + & - & - & + & + & - & - & + & + & - & - & - & - & + & + \\
- & 3 & - & - & + & - & + & - & + & - & + & - & - & - & + & + & - & - & + & + & + & + & - & - & + & + & - & - \\
- & - & 3 & - & - & + & - & + & + & - & + & - & + & + & - & - & + & + & - & - & - & - & + & + & + & + & - & - \\
- & - & - & 3 & - & + & - & + & - & + & - & + & - & - & + & + & - & - & + & + & - & - & + & + & - & - & + & + \\
+ & + & - & - & 3 & - & - & - & + & + & - & - & + & - & + & - & - & + & - & + & + & + & - & - & + & - & - & + \\
- & - & + & + & - & 3 & - & - & + & + & - & - & + & - & + & - & + & - & + & - & - & - & + & + & - & + & + & - \\
+ & + & - & - & - & - & 3 & - & - & - & + & + & - & + & - & + & + & - & + & - & + & + & - & - & - & + & + & - \\
- & - & + & + & - & - & - & 3 & - & - & + & + & - & + & - & + & - & + & - & + & - & - & + & + & + & - & - & + \\
- & + & + & - & + & + & - & - & 3 & - & - & - & + & - & + & - & + & - & - & + & + & - & - & + & + & + & - & - \\
+ & - & - & + & + & + & - & - & - & 3 & - & - & + & - & + & - & - & + & + & - & - & + & + & - & - & - & + & + \\
- & + & + & - & - & - & + & + & - & - & 3 & - & - & + & - & + & - & + & + & - & - & + & + & - & + & + & - & - \\
+ & - & - & + & - & - & + & + & - & - & - & 3 & - & + & - & + & + & - & - & + & + & - & - & + & - & - & + & + \\
+ & - & + & - & + & + & - & - & + & + & - & - & 3 & - & - & - & + & + & - & - & - & + & - & + & - & + & - & + \\
+ & - & + & - & - & - & + & + & - & - & + & + & - & 3 & - & - & + & + & - & - & + & - & + & - & + & - & + & - \\
- & + & - & + & + & + & - & - & + & + & - & - & - & - & 3 & - & - & - & + & + & + & - & + & - & + & - & + & - \\
- & + & - & + & - & - & + & + & - & - & + & + & - & - & - & 3 & - & - & + & + & - & + & - & + & - & + & - & + \\
+ & - & + & - & - & + & + & - & + & - & - & + & + & + & - & - & 3 & - & - & - & + & - & - & + & - & + & + & - \\
+ & - & + & - & + & - & - & + & - & + & + & - & + & + & - & - & - & 3 & - & - & - & + & + & - & + & - & - & + \\
- & + & - & + & - & + & + & - & - & + & + & - & - & - & + & + & - & - & 3 & - & - & + & + & - & - & + & + & - \\
- & + & - & + & + & - & - & + & + & - & - & + & - & - & + & + & - & - & - & 3 & + & - & - & + & + & - & - & + \\
+ & + & - & - & + & - & + & - & + & - & - & + & - & + & + & - & + & - & - & + & 3 & - & - & - & + & - & + & - \\
+ & + & - & - & + & - & + & - & - & + & + & - & + & - & - & + & - & + & + & - & - & 3 & - & - & - & + & - & + \\
- & - & + & + & - & + & - & + & - & + & + & - & - & + & + & - & - & + & + & - & - & - & 3 & - & + & - & + & - \\
- & - & + & + & - & + & - & + & + & - & - & + & + & - & - & + & + & - & - & + & - & - & - & 3 & - & + & - & + \\
- & + & + & - & + & - & - & + & + & - & + & - & - & + & + & - & - & + & - & + & + & - & + & - & 3 & - & - & - \\
- & + & + & - & - & + & + & - & + & - & + & - & + & - & - & + & + & - & + & - & - & + & - & + & - & 3 & - & - \\
+ & - & - & + & - & + & + & - & - & + & - & + & - & + & + & - & + & - & + & - & + & - & + & - & - & - & 3 & - \\
+ & - & - & + & + & - & - & + & - & + & - & + & + & - & - & + & - & + & - & + & - & + & - & + & - & - & - & 3 \\
\end{array}\right] $ }
\end{center}
\caption{A primitive idempotent for the action of $AGL(\F_2^3)$ on the set of lines in $\F_2^3$. It describes a $7\times 28$ real ETF.}  \label{fig:7x28}
\end{figure}

\end{example}

\subsection{More conditions for homogeneity} \label{subsec:homCon}

Let $\Phi = \{ \phi_x \}_{x\in X}$ be any finite frame, regardless of its origin. In practice, we often find nice frames by accident and then work very hard to ``reverse engineer'' them, in the hope of finding a larger family. If we think that symmetry might play a role in our accident, then it is natural to wonder if $\Phi$ might be homogeneous, and if so, what the groups involved might be. We now address this problem. 

For each permutation $\sigma \in S(X)$, let $P_\sigma = [ \delta_{x,\sigma y}]_{x,y\in X}$ be the corresponding $X\times X$ permutation matrix. Writing $\G = \Phi^*\Phi$ for the Gram matrix of $\Phi$, we define
\[ G(\Phi) = \{ \sigma \in S(X) : P_\sigma \G = \G P_\sigma \}, \]
which is clearly a subgroup of $S(X)$. Following the proof of \cite[Lemma~3.5]{VW10}, it is easy to show that $\sigma \in G(\Phi)$ if and only if there is a unitary $U_\sigma$, necessarily unique, such that $U_\sigma \phi_x = \phi_{\sigma x}$ for all $x\in X$. When $\Phi$ is tight, it follows by \cite[Example~1]{VW10} that $G(\Phi)$ is precisely the \emph{symmetry group} of $\Phi$ introduced by Vale and Waldron \cite{VW10}. 

When restricted to \emph{tight} frames, the first part of the following theorem recasts \cite[Theorem~4.8]{VW05} in the language of Schurian association schemes.

\begin{theorem} \label{thm:sym}
Fix any point $x_0 \in X$, and let $H(\Phi) = G(\Phi)_{x_0}$ be the stabilizer of $x_0$ in $G(\Phi)$. Then the following hold:
\begin{enumerate}[(i)]
\item $\Phi$ is a homogeneous frame if and only if $G(\Phi)$ acts transitively on $X$. In that case, $\Phi$ is a $(G(\Phi),H(\Phi))$-frame.
\item If $\Phi$ is a $(G,H)$-frame for any Gelfand pair $(G,H)$, then $(G(\Phi),H(\Phi))$ is also a Gelfand pair.
\end{enumerate}
\end{theorem}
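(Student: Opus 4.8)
The plan is to build everything on the characterization recalled just before the statement: a permutation $\sigma \in S(X)$ lies in $G(\Phi)$ if and only if there is a unique unitary $U_\sigma$ with $U_\sigma \phi_x = \phi_{\sigma x}$ for all $x \in X$. Uniqueness immediately makes $\sigma \mapsto U_\sigma$ a homomorphism, hence a unitary representation of $G(\Phi)$ on $\spn \Phi$, since $U_\sigma U_\tau \phi_x = U_\sigma \phi_{\tau x} = \phi_{\sigma\tau x}$ forces $U_{\sigma\tau} = U_\sigma U_\tau$. This representation is the engine for both parts.

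For the first assertion of (i), suppose $\Phi$ is a $(G,H)$-frame, so after a $G$-set identification $X \cong G/H$ we have $\phi_{gH} = \rho(g)v$. For each $k \in G$ the left translation $\sigma_k \colon gH \mapsto kgH$ satisfies $\rho(k)\phi_{gH} = \rho(kg)v = \phi_{\sigma_k(gH)}$, so $\sigma_k \in G(\Phi)$ with $U_{\sigma_k} = \rho(k)$. Since $G$ is transitive on $G/H$, the subset $\{\sigma_k : k \in G\} \subseteq G(\Phi)$ is already transitive, hence so is $G(\Phi)$. Conversely, assume $G(\Phi)$ is transitive and put $v = \phi_{x_0}$. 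Every $h \in H(\Phi)$ fixes $x_0$, so $U_h v = \phi_{h\cdot x_0} = \phi_{x_0} = v$, i.e. $v$ is $H(\Phi)$-stable. Transitivity lets us write each $x$ as $\sigma\cdot x_0$ for some $\sigma \in G(\Phi)$, giving $\phi_x = U_\sigma v$; and $\phi_x$ depends only on the coset $\sigma H(\Phi)$, because $\sigma\cdot x_0 = \tau\cdot x_0$ means $\tau^{-1}\sigma \in H(\Phi)$. Thus $\Phi = \{U_\sigma v\}_{\sigma H(\Phi) \in G(\Phi)/H(\Phi)}$ is exactly a $(G(\Phi),H(\Phi))$-frame, proving (i).

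For (ii), the key observation is that adjacency algebras shrink as the acting group grows: if $G_1 \subseteq G_2 \subseteq S(X)$, then being $G_2$-stable is a stronger condition than being $G_1$-stable, so the $G_2$-stable matrices form a subalgebra of the $G_1$-stable ones. Let $\A$ be the adjacency algebra of the $G$-action and $\A'$ that of the $G(\Phi)$-action. By the computation in (i), the image of $G$ in $S(X)$ is contained in $G(\Phi)$; since an adjacency algebra depends only on the permutation image, this containment of groups yields $\A' \subseteq \A$. Because $(G,H)$ is a Gelfand pair, $\A$ is commutative, and a subalgebra of a commutative algebra is commutative, so $\A'$ is commutative as well. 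Combined with the transitivity of $G(\Phi)$ from (i), this says precisely that $(G(\Phi),H(\Phi))$ is a Gelfand pair.

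The proof is short, and the only genuinely substantive point is the inclusion-reversing behaviour of adjacency algebras in (ii): one must resist comparing $G$ and $G(\Phi)$ directly, and instead note that passing to the larger symmetry group $G(\Phi)$ can only impose more stability constraints, hence carve out a subalgebra of $\A$ on which commutativity is automatically inherited. The routine things to keep straight are the identification $X \cong G(\Phi)/H(\Phi)$ and the verification that $\sigma \mapsto U_\sigma$ is genuinely a representation, both of which follow from the uniqueness clause in the recalled characterization.
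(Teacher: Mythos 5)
Your proof is correct, but it takes a genuinely different route through part (i) than the paper does. The paper's proof is deliberately short: it works entirely at the Gram-matrix level and leans on Theorem~\ref{thm:hgf} in both directions. If $G(\Phi)$ is transitive, then $\G=\Phi^*\Phi$ commutes with $P_\sigma$ for all $\sigma\in G(\Phi)$ by the very definition of $G(\Phi)$, so $\G$ lies in the adjacency algebra of the $G(\Phi)$-action and Theorem~\ref{thm:hgf} says $\G$ is the Gram matrix of a $(G(\Phi),H(\Phi))$-frame; conversely, if $\Phi$ is a $(G,H)$-frame, Theorem~\ref{thm:hgf} puts $\G$ in the adjacency algebra of the induced action $\alpha\colon G\to S(X)$, whence $\alpha(G)\subset G(\Phi)$ and transitivity follows. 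You instead bypass Theorem~\ref{thm:hgf} entirely and argue on the level of vectors: you promote the recalled characterization ($\sigma\in G(\Phi)$ iff a unique unitary $U_\sigma$ exists with $U_\sigma\phi_x=\phi_{\sigma x}$) into an honest unitary representation $\sigma\mapsto U_\sigma$ via the uniqueness clause, and then exhibit $\Phi$ literally as the orbit frame $\{U_\sigma\phi_{x_0}\}_{\sigma H(\Phi)\in G(\Phi)/H(\Phi)}$, checking $H(\Phi)$-stability of $v=\phi_{x_0}$ and well-definedness over cosets. This buys you something slightly stronger than the paper states: $\Phi$ is a $(G(\Phi),H(\Phi))$-frame with its own vectors, not merely one whose Gram matrix matches (Theorem~\ref{thm:hgf} reconstructs a frame from $\psi_\G^{1/2}$, so it only identifies $\Phi$ up to unitary equivalence). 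The cost is that your argument is self-contained only modulo the Vale--Waldron characterization, whereas the paper's version recycles machinery it has already built. Your forward direction of (i) (embedding $\alpha(G)$ into $G(\Phi)$ via the unitaries $\rho(k)$) and your part (ii) coincide with the paper's in substance: both rest on the same inclusion-reversal, $\alpha(G)\subset G(\Phi)$ forcing the $G(\Phi)$-adjacency algebra to sit inside the $G$-adjacency algebra, with commutativity inherited by the subalgebra, so for (ii) the two proofs are the same argument in different notation.
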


\begin{proof}
First, suppose that $G(\Phi)$ is a transitive permutation group, and let
\[ \A = \{ M \in \Mat_{X\times X}(\C) : P_\sigma M = M P_\sigma \text{ for all }\sigma \in G(\Phi) \} \]
be the corresponding adjacency algebra. We have $\G \in \A$ by definition, so $\Phi$ is a $(G(\Phi),H(\Phi))$-frame by Theorem~\ref{thm:hgf}.

Conversely, if $\Phi$ is a $(G,H)$-frame for some group $G$ and subgroup $H$, then there is a transitive group action $\alpha \colon G \to S(X)$ with point stabilizer $H$ for which $\G$ lies in the adjacency algebra
\[ \A' = \{ M \in \Mat_{X\times X}(\C) : P_\sigma M = M P_\sigma \text{ for all }\sigma \in \alpha(G) \}, \]
by Theorem~\ref{thm:hgf}. Consequently, $\alpha(G) \subset G(\Phi)$. It follows that $G(\Phi)$ is also transitive. Moreover, $\A \subset \A'$, so $\A$ is commutative whenever $\A'$ is. When $(G,H)$ is a Gelfand pair, this means that $(G(\Phi),H(\Phi))$ is, too.
\end{proof}

\begin{cor}
A finite frame $\Phi = \{ \phi_x \}_{x\in X}$ is homogeneous if and only if, for every $x,y \in X$, there is a unitary $U$ such that $U\Phi$ is a permutation of $\Phi$ with $U \phi_x = \phi_y$.
\end{cor}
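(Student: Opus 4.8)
The plan is to deduce the corollary directly from Theorem~\ref{thm:sym}(i), which says that $\Phi$ is homogeneous if and only if $G(\Phi)$ acts transitively on $X$. It therefore suffices to show that the displayed unitary condition is equivalent to transitivity of $G(\Phi)$. The bridge is the characterization recalled just before Theorem~\ref{thm:sym}: a permutation $\sigma \in S(X)$ lies in $G(\Phi)$ precisely when there is a unique unitary $U_\sigma$ with $U_\sigma \phi_x = \phi_{\sigma x}$ for all $x \in X$. I would first observe that, for a unitary $U$, saying ``$U\Phi$ is a permutation of $\Phi$'' means exactly that $U\phi_z = \phi_{\sigma z}$ for all $z$ and some $\sigma \in S(X)$; since $\Phi$ spans $\H$, such a $U$ is forced to equal $U_\sigma$ and $\sigma$ must lie in $G(\Phi)$. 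Thus the unitaries permuting $\Phi$ are exactly $\{\, U_\sigma : \sigma \in G(\Phi) \,\}$, and the corollary becomes a statement purely about $G(\Phi)$.

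For the forward direction, suppose $\Phi$ is homogeneous, so $G(\Phi)$ is transitive. Given $x,y \in X$, choose $\sigma \in G(\Phi)$ with $\sigma x = y$ and set $U = U_\sigma$. Then $U\Phi$ is a permutation of $\Phi$ and $U\phi_x = \phi_{\sigma x} = \phi_y$, as required.

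For the converse, I would fix $x,y \in X$ and invoke the hypothesis to obtain a unitary $U$ that permutes $\Phi$ with $U\phi_x = \phi_y$. By the observation above, $U = U_\sigma$ for some $\sigma \in G(\Phi)$, and comparing $U\phi_x = \phi_{\sigma x}$ with $U\phi_x = \phi_y$ gives $\phi_{\sigma x} = \phi_y$. The main obstacle is exactly here: this yields $\phi_{\sigma x} = \phi_y$, \emph{not} $\sigma x = y$, so when $\Phi$ has repeated vectors the permutation $\sigma$ need not move $x$ to $y$, and transitivity of $G(\Phi)$ is not immediate. I would clear this with a short lemma: whenever $\phi_a = \phi_b$, the transposition $(a\,b)$ belongs to $G(\Phi)$. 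Indeed, equal frame vectors produce identical rows and columns of $\G = \Phi^*\Phi$ (since $\G_{a,j} = \langle \phi_j,\phi_a\rangle = \langle \phi_j,\phi_b\rangle = \G_{b,j}$, and symmetrically for columns), so $\G$ is invariant under swapping the indices $a$ and $b$, i.e.\ $P_{(a\,b)}$ commutes with $\G$. Applying this with $a = \sigma x$ and $b = y$, the product $(\sigma x\;\, y)\,\sigma$ lies in $G(\Phi)$ and carries $x$ to $y$. Hence $G(\Phi)$ is transitive, and $\Phi$ is homogeneous by Theorem~\ref{thm:sym}(i).

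I expect this transposition lemma---which absorbs the non-injectivity of the map $z \mapsto \phi_z$---to be the only point requiring any argument; everything else is a formal translation through the characterization of $G(\Phi)$ and Theorem~\ref{thm:sym}.
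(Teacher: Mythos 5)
Your proposal is correct, and it follows the only route the paper intends: the corollary is stated there without proof, as an immediate consequence of Theorem~\ref{thm:sym}(i) combined with the characterization of $G(\Phi)$ through the unitaries $U_\sigma$ recalled just before that theorem, which is exactly your bridge. Your transposition lemma --- that $\phi_a = \phi_b$ forces $P_{(a\,b)}$ to commute with $\G$, so $(\sigma x\;\, y)\,\sigma \in G(\Phi)$ carries $x$ to $y$ --- is a valid and genuinely needed patch for the repeated-vector subtlety ($\phi_{\sigma x} = \phi_y$ versus $\sigma x = y$) that the paper's unproved statement silently glosses over.
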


\begin{rem}
We mention two more tricks that might help a researcher find a group that is ``responsible'' for a given frame $\Phi$. First, arguing just as in the proof of Theorem~\ref{thm:sym}, one can easily see that $\G$ is $G$-stable for any transitive subgroup $G\subset G(\Phi)$. Thus, if it happens that $G(\Phi)$ acts transitively and multiplicity-freely on $X$, one can search the subgroup lattice of $G(\Phi)$ to find more groups with that description. Both of those properties are preserved by inclusion into a larger permutation group, so it could be that some subgroup $G\subset G(\Phi)$ provides a starting point for a simpler description of $\Phi$, and that $G(\Phi)$ inherited those properties only by default.

Second, if one can establish that $\G$ lies in the adjacency algebra of some association scheme $\mathfrak{X}$, then one can use graph-theoretic algorithms to determine whether or not $\mathfrak{X}$ is a Schurian scheme, and if so, what the corresponding permutation group is. See \cite[Theorem~2.5]{H11} and \cite[Theorem~6.3.1]{Zie05}. We have written a GAP function that does precisely this \cite{github2}, returning the relevant permutation group $G\subset S(X)$ when it exists. When this procedure succeeds, we necessarily have $G \subset G(\Phi)$. Just as above, one can then dig deeper into the subgroup lattice of $G$ to look for alternative ``explanations'' for $\Phi$.
\end{rem}

\subsection{Regular subgroups} \label{subsec:reg}
Now consider the reverse problem: instead of starting with a frame and trying to find a group responsible for it, we start with a group $K$ (like the Heisenberg group) and try to isolate a nice set of $K$-frames. As we now explain, this is sometimes possible using an action of a \emph{larger} group $G \supset K$.

\begin{defn}
Let $G$ be a finite group acting transitively on a set $X$. A subgroup $K\subset G$ is called \emph{regular} for this action if:
\begin{enumerate}[(1)]
\item The action of $K$ on $X$ is transitive; and
\item The stabilizer $K_x$ of each $x\in X$ in $K$ is trivial.
\end{enumerate}
\end{defn}
For the remainder of this subsection, we fix a finite group $G$ acting transitively on a set $X$, and we let $H$ denote the stabilizer of a fixed base point $x_0\in X$. It is easy to prove that a subgroup $K\subset G$ is regular if and only if $G= KH$ and $K \cap H = \{1_G\}$, if and only if $K$ is a complete and irredundant set of left coset representatives for $H$ in $G$. This leads us to the following simple, but crucial, observation.

\begin{prop} \label{prop:milk}
If $K\subset G$ is a regular subgroup for the action of $G$ on $X$, then every $(G,H)$-frame is a $K$-frame.
\end{prop}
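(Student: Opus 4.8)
The plan is to unwind the definitions and use the characterization of regular subgroups just established. Recall that a $(G,H)$-frame has the form $\Phi' = \{\rho(g)v\}_{gH \in G/H}$, where $v$ is stabilized by $H$. The goal is to reindex this family by $K$ rather than by the cosets $G/H$, and to recognize the result as a $K$-frame, i.e.\ a frame of the form $\{\rho(k)w\}_{k\in K}$ for a single orbit vector $w$.

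First I would invoke the stated equivalence: since $K$ is regular, $K$ is a complete and irredundant set of left coset representatives for $H$ in $G$. This means the map $K \to G/H$ sending $k \mapsto kH$ is a bijection. Thus every coset $gH$ has a unique representative $k\in K$, and I can rewrite the indexing set of $\Phi'$: the family $\{\rho(g)v\}_{gH\in G/H}$ is literally the same set of vectors as $\{\rho(k)v\}_{k\in K}$, since $\rho(kh)v = \rho(k)\rho(h)v = \rho(k)v$ for $h\in H$ (using $H$-invariance of $v$), so the choice of coset representative does not affect the vector.

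Next I would set $w = v$ and observe that $\Phi' = \{\rho(k)w\}_{k\in K}$ is, by definition, a $K$-frame: it is the orbit of the single vector $w$ under the restriction $\rho|_K$ of the representation to $K$, indexed by all of $K$ (each element appearing exactly once, thanks to $K\cap H = \{1_G\}$ ensuring no collapsing of the index set $K$ itself). The frame bounds carry over unchanged because the underlying set of vectors is unchanged. This completes the identification.

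I expect no serious obstacle here; the content is entirely in the bookkeeping of the coset correspondence. The one point requiring a moment's care is making sure the reindexing is genuinely well-defined and bijective rather than merely surjective onto the vectors: the irredundancy ($K\cap H=\{1_G\}$, equivalently distinct elements of $K$ lie in distinct cosets) guarantees that $\{\rho(k)v\}_{k\in K}$ runs over the same multiset with the same multiplicities as $\{\rho(g)v\}_{gH}$, so the two really are the same frame and not merely frames with the same span. Given Proposition's preceding characterization of regularity, this is immediate.
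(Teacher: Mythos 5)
Your proposal is correct and follows exactly the paper's argument: the paper's one-line proof likewise observes that $K$ being a complete, irredundant set of left coset representatives for $H$ lets one reindex $\{\rho(g)v\}_{gH\in G/H}$ as $\{\rho(k)v\}_{k\in K}$, with $H$-stability of $v$ ensuring independence of the choice of representative. Your additional remarks on well-definedness and bijectivity of the reindexing simply make explicit what the paper leaves implicit.
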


\begin{proof}
The expression $\Phi = \{ \rho(g) v \}_{gH \in G/H}$ for a $(G,H)$-frame does not depend on the choice of coset representatives $g\in gH$, so $\Phi = \{ \rho(k) v \}_{k \in K}$.
\end{proof}

\begin{rem}
It may very well happen that $(G,H)$ is a Gelfand pair, while a regular subgroup $K$ is nonabelian. In that case, the spherical functions for the Gelfand pair distinguish a finite subset of the uncountably infinite collection of tight $K$-frames. (See \cite{VW08} for the cardinality of $K$-frames.)

An important class of examples occurs when $G = K \rtimes H$ and $(G,H)$ forms a Gelfand pair. For the researcher exploring the class of $K$-frames, it may be worthwhile to traverse the lattice of subgroups $H \subset \Aut(K)$ and examine the spherical functions formed by any Gelfand pairs $({K\rtimes H},H)$. In fact, that is exactly how the authors discovered the class of Heisenberg ETFs described in Section~\ref{sec:Heis}.
\end{rem}

We can say more by involving the adjacency algebra. When $K\subset G$ is regular for the action of $G$ on $X$, the mapping $k\mapsto k\cdot x_0$ determines a bijection $K \cong X$, which turns $K$ into a $G$-set with the action
\[ g\cdot k = k' \iff gk\cdot x_0 = k'\cdot x_0 \qquad (g\in G;\ k,k' \in K), \]
or equivalently,
\begin{equation} \label{eq:gk}
g\cdot k = k' \iff gkh = k' \text{ for some $h \in H$} \qquad (g\in G;\ k,k' \in K).
\end{equation}
If it happens that $H$ normalizes $K$ (for instance, if $G = K \rtimes H$), then the action of $H$ on $K$ is simply conjugation; however, we do not assume this in general. 
The theorem below says that we can identify $\A$ with the space
\[ L^2(K)^H := \{ \varphi \in L^2(K) : \varphi(h \cdot k) = \varphi(k) \text{ for all $k \in K$ and $h\in H$}\}. \]
This appears to be a folk theorem. The closest reference we could locate is \cite[Theorem~6.1]{BI}. We include a proof here for completeness.

\begin{theorem} \label{thm:milk}
When $K\subset G$ is a regular subgroup, $L^2(K)^H$ is a $*$-subalgebra of $L^2(K)$. It is isomorphic to the adjacency algebra of $G$-stable matrices by mapping $A\in \A$ to the function 
\[ \tilde{\varphi}_A(k) = A_{x_0,k\cdot x_0} \qquad (k\in K). \]
\end{theorem}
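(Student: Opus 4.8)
The plan is to transport the whole problem onto the group $K$ by means of the bijection $K \cong X$, $k \mapsto k\cdot x_0$, under which each $A \in \A$ becomes a $K\times K$ matrix $B$ with $B_{k,k'} = A_{k\cdot x_0,\,k'\cdot x_0}$, so that $\tilde\varphi_A(k) = B_{1_K,k}$. Since $K$ is regular we have $G = KH$ with $K\cap H = \{1_G\}$, and because $\langle K,H\rangle = G$, a matrix is $G$-stable precisely when it is simultaneously $K$-stable and $H$-stable. Restricted to $K\subset G$, the action \eqref{eq:gk} is just left multiplication ($m\cdot k = mk$ for $m\in K$), so $K$-stability reads $B_{mk,mk'} = B_{k,k'}$; taking $m = k^{-1}$ forces the circulant form $B_{k,k'} = \tilde\varphi_A(k^{-1}k')$. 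Conversely every such circulant matrix is $K$-stable, so the kernel map identifies the $K$-stable matrices with all of $L^2(K)$.

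Next I would record the algebra structure. A routine index computation shows that for circulant matrices the matrix product corresponds to the convolution of \eqref{eq:psiA}'s ambient $*$-algebra, $(B^{(1)}B^{(2)})_{k,k'} = (\tilde\varphi_{A_1} * \tilde\varphi_{A_2})(k^{-1}k')$, while the conjugate transpose corresponds to $\varphi^*(k) = \overline{\varphi(k^{-1})}$. Hence $A\mapsto \tilde\varphi_A$ is already an injective $*$-algebra homomorphism of $\A$ into $L^2(K)$, and the only remaining task is to identify its image exactly with $L^2(K)^H$.

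The crux is imposing the leftover $H$-stability on the circulant $B_{k,k'} = \varphi(k^{-1}k')$. One inclusion is immediate: evaluating $H$-stability at the pair $(1_K,k)$ and using $h\cdot 1_K = 1_K$ (as $h$ fixes $x_0$) shows $\varphi(h\cdot k) = \varphi(k)$, so $\tilde\varphi_A \in L^2(K)^H$. For surjectivity I would start from $\varphi \in L^2(K)^H$ and verify that its circulant $B$ is $H$-stable, i.e. $\varphi\bigl((h\cdot k)^{-1}(h\cdot k')\bigr) = \varphi(k^{-1}k')$. The key identity — and the step I expect to be the main obstacle — is that $(h\cdot k)^{-1}(h\cdot k')$ is an $H$-translate of $k^{-1}k'$: writing $hk = (h\cdot k)h_1$ and $hk' = (h\cdot k')h_2$ from \eqref{eq:gk} gives $(h\cdot k)^{-1}(h\cdot k') = h_1(k^{-1}k')h_2^{-1}$, and since this element lies in $K$, the uniqueness of the $K$-$H$ factorization (regularity of $K$) collapses it to $h_1\cdot(k^{-1}k')$. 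The $H$-invariance of $\varphi$ then finishes the computation, so $B$ is $H$-stable, hence $G$-stable, i.e. $A\in\A$. Combining the three steps, $A\mapsto\tilde\varphi_A$ is a $*$-algebra isomorphism of $\A$ onto $L^2(K)^H$, which is therefore a $*$-subalgebra of $L^2(K)$, as claimed.
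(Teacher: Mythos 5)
Your proof is correct, but it takes a genuinely different route from the paper's. You argue entirely at the matrix level: since $G=KH$ and the set of elements of $G$ stabilizing a given matrix is a subgroup, $G$-stability splits into $K$-stability plus $H$-stability; $K$-stability alone forces the circulant form $B_{k,k'}=\tilde{\varphi}_A(k^{-1}k')$, identifying the $K$-stable matrices with $L^2(K)$ via the first row, under which matrix product becomes convolution and adjoint becomes $\varphi^*(k)=\overline{\varphi(k^{-1})}$ (both computations are indeed routine and correct); and the residual $H$-stability is translated into $H$-invariance of the kernel by your coset identity $(h\cdot k)^{-1}(h\cdot k')=h_1\cdot(k^{-1}k')$, which is exactly where regularity enters --- your justification is sound, since $h_1(k^{-1}k')h_2^{-1}$ lies in $K\cap h_1(k^{-1}k')H$, and that intersection is a singleton because $K$ is a transversal for the left cosets of $H$. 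The paper instead factors the isomorphism through the bi-invariant function algebra: it invokes the isomorphism $\A\cong L^2(H\backslash G/H)$, $A\mapsto\tilde{\psi}_A$, from the preliminaries, and constructs $T\colon L^2(H\backslash G/H)\to L^2(K)^H$, essentially restriction to $K$ rescaled by $|H|$; bijectivity of $T$ follows from the orbit--double-coset correspondence $H\backslash G/H\cong K^H$, $HgH\mapsto HgH\cap K$ (the identity $[k]=HkH\cap K$ playing the role of your coset identity), and multiplicativity from comparing $*_G$ with $*_K$ via the unique factorization $g=k_gh_g$ --- the same regularity input as yours, deployed inside the convolution computation rather than in a coset manipulation. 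Your version is more elementary and self-contained: no normalizing constant $|H|$, no inversion twist $\tilde{\psi}_A(g)=\psi_A(g^{-1})$, and the algebra check reduces to the standard circulant fact. The paper's version buys the explicit dictionary between $H$-orbits in $K$ and double cosets, which exhibits the natural basis $\{|H|\cdot\mathbf{1}_{HgH\cap K}\}$ of $L^2(K)^H$ and connects the theorem directly to the $L^2(H\backslash G/H)$ machinery underlying the spherical functions used elsewhere in the paper.
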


\begin{proof}
The main idea is to identify $L^2(K)^H$ with $L^2(H\backslash G /H)$. We will write $[k]$ for the $H$-orbit of $k\in K$, and $K^H$ for the set of $H$-orbits in $K$, with a similar notation in $G/H$. From \eqref{eq:gk}, we see that
\[ [k] = HkH \cap K \qquad (k \in K). \]
The mapping $K \to G/H$, $k \mapsto kH$, is an isomorphism of $H$-sets, so $K^H \cong (G/H)^H$ by the bijection $[k] \mapsto [kH]$ . On the other hand, $H$-orbits in $G/H$ are precisely described by double cosets through the correspondence $[gH] \mapsto HgH$. Consequently, $H\backslash G / H \cong K^H$ through the mapping $HgH \mapsto HgH \cap K$. It follows that there is a linear isomorphism $T\colon L^2(H \backslash G / H) \to L^2(K)^H$ with $T \mathbf{1}_{HgH} = |H| \cdot \mathbf{1}_{HgH \cap K}$ for all $g\in G$; or equivalently,
\[ (T \mathbf{1}_{HgH})(k) = |H|\cdot \mathbf{1}_{HgH \cap K}(k) = |H| \cdot \mathbf{1}_{HgH}(k) \qquad (k \in K). \]
Extending linearly, we see that $(T \psi)(k) = |H|\cdot \psi(k)$ for all $\psi \in L^2(H\backslash G / H)$ and $k\in K$.

We claim that $T$ preserves the $*$-algebraic structure of $L^2(H\backslash G / H)$. Let us write $\psi_1 *_G \psi_2$ for convolution in $L^2(G)$ and $\varphi_1 *_K \varphi_2$ for convolution in $L^2(K)$. For any $\psi_1,\psi_2 \in L^2(H\backslash G / H)$ and any $k' \in K$, we have
\begin{align*}
[T(\psi_1 *_G \psi_2)](k') &= |H|\cdot \sum_{g\in G} \psi_1(g) \psi_2(g^{-1} k').
\end{align*}
Any $g\in G$ can be written uniquely in the form $g= k_g h_g$ with $k_g \in K$ and $h_g \in H$, so
\begin{align*}
[T(\psi_1 *_G \psi_2)](k') &= |H| \cdot \sum_{g\in G} \psi_1(k_g h_g) \psi_2(h_g^{-1} k_g^{-1} k') \\[5 pt]
&= |H| \cdot \sum_{g\in G} \psi_1(k_g) \psi_2(k_g^{-1} k').
\end{align*}
Reindexing with $g = kh$ for $k \in K$ and $h \in H$, we obtain
\begin{align*}
[T(\psi_1 *_G \psi_2)](k') &= |H|\cdot \sum_{k \in K} \sum_{h\in H}  \psi_1(k) \psi_2(k^{-1} k') \\[5 pt]
&= |H|^2 \cdot \sum_{k \in K} \psi_1(k) \psi_2(k^{-1} k') \\[5 pt]
&= \sum_{k \in K} (T \psi_1)(k) \cdot (T \psi_2)(k^{-1} k') \\[5 pt]
&= [(T \psi_1) *_K (T \psi_2)](k').
\end{align*}
In other words,
\[ T(\psi_1 *_G \psi_2) = (T \psi_1) *_K (T \psi_2). \]
It is easy to see that $T$ preserves the involution. Therefore, $L^2(K)^H$ is a $*$-subalgebra of $L^2(K)$, and $T\colon L^2(H\backslash G / H) \to L^2(K)^H$ is a $*$-algebra isomorphism.

To complete the proof, we recall that $\A$ is isomorphic to $L^2(H\backslash G/H)$ by mapping $A\in \A$ to the function
\[ \tilde{\psi}_A(g) = \psi_A(g^{-1}) = \frac{1}{|H|} A_{g^{-1}\cdot x_0, x_0} = \frac{1}{|H|} A_{x_0,g\cdot x_0} \qquad (g \in G). \]
Now, we simply observe that $T \tilde{\psi}_A = \tilde{\varphi}_A$.
\end{proof}

With the aid of Theorem~\ref{thm:milk}, we can now say precisely which $K$-frames arise as $(G,H)$-frames.

\begin{defn}
Let $\rho \colon K \to U(\H)$ be a unitary representation of $K$, and suppose there is a vector $v\in \H$ for which $\Phi:=\{\rho(k)v\}_{k\in K}$ is a frame. The \emph{function of positive type} associated with $\Phi$ is $\varphi \colon K \to \C$, given by
\[ \varphi(k) = \langle v, \rho(k) v \rangle \qquad (k\in K). \]

When $K\subset G$ is a regular subgroup, we say that $\Phi$ \emph{extends to a $(G,H)$-frame} if there is an extension $\tilde{\rho} \colon G \to U(\H)$ of $\rho$ for which $v$ is an $H$-stable vector; in that case, $\Phi = \{ \tilde{\rho}(g) v\}_{gH \in G/H}$ is literally a $(G,H)$-frame .
\end{defn}

Functions of positive type classify $K$-frames up to unitary equivalence; see \cite{I2} for details.

\begin{cor} \label{cor:milk}
When $K\subset G$ is a regular subgroup, a $K$-frame $\Phi = \{ \rho(k) v \}_{k\in K}$ extends to a $(G,H)$-frame if and only if its function of positive type belongs to $L^2(K)^H$.
\end{cor}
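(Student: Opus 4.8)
The plan is to route everything through the Gram matrix $\G$ of the $K$-frame $\Phi$ and the isomorphism $\A\cong L^2(K)^H$ of Theorem~\ref{thm:milk}. Indexing $\G$ by $K$ and using unitarity of $\rho$, its entries are $\G_{k,k'} = \langle\rho(k')v,\rho(k)v\rangle = \langle\rho(k^{-1}k')v,v\rangle = \overline{\varphi(k^{-1}k')}$, so that the distinguished function $\tilde\varphi_\G(k) = \G_{x_0,k\cdot x_0} = \G_{1_K,k}$ attached to $\G$ in Theorem~\ref{thm:milk} is exactly $\overline\varphi$. Since $L^2(K)^H$ is manifestly closed under complex conjugation, I would first record the chain of equivalences $\varphi\in L^2(K)^H \iff \overline\varphi = \tilde\varphi_\G\in L^2(K)^H \iff \G\in\A$, the last step being the $*$-algebra isomorphism of Theorem~\ref{thm:milk}. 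This reduces the corollary to the single claim that $\Phi$ extends to a $(G,H)$-frame if and only if $\G\in\A$.

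One implication is immediate: if $\Phi$ extends, then by definition $\Phi=\{\tilde\rho(g)v\}_{gH\in G/H}$ is literally a $(G,H)$-frame, and since $K$ is a set of coset representatives its Gram matrix as a $(G,H)$-frame coincides with $\G$; hence $\G\in\A$ by Theorem~\ref{thm:hgf}. For the converse I would start from $\G\in\A$ and invoke Theorem~\ref{thm:hgf} — noting that $\G$ is positive semidefinite, being a Gram matrix — to realize $\G$ as the Gram matrix of \emph{some} $(G,H)$-frame $\Psi=\{\sigma(g)w\}_{gH\in G/H}$, with $\sigma\colon G\to U(\H')$ and $w$ an $H$-stable vector. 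Restricting to the coset representatives $K$ yields a $K$-frame $\Psi|_K=\{\sigma(k)w\}_{k\in K}$ whose Gram matrix is again $\G$, the same as that of $\Phi$.

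The main work — and the step I expect to be the real obstacle — is then to upgrade this abstract realization into an honest \emph{extension of the given} $\rho$ rather than of a unitarily equivalent copy. Since $\spn\Phi$ is $\rho(K)$-invariant, I may assume $\H=\spn\Phi$; because $\Phi$ and $\Psi|_K$ have equal Gram matrices, the preliminary fact that a frame is determined by its Gram matrix up to unitary equivalence supplies a unitary $W\colon\H\to\spn\Psi$ with $W\rho(k)v=\sigma(k)w$ for all $k\in K$, and in particular $Wv=w$. Setting $\tilde\rho(g):=W^{-1}\sigma(g)W$ then defines a unitary representation of $G$ on $\H$; evaluating on the spanning vectors $\rho(k')v$ shows $\tilde\rho(k)=\rho(k)$ for $k\in K$, so $\tilde\rho$ genuinely extends $\rho$, while $\tilde\rho(h)v=W^{-1}\sigma(h)w=W^{-1}w=v$ shows $v$ is $H$-stable. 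Thus $\Phi=\{\tilde\rho(g)v\}_{gH}$ extends to a $(G,H)$-frame, completing the converse. The only points demanding care are the bookkeeping between the $X\cong G/H\cong K$ indexings when reading off $\tilde\varphi_\G=\overline\varphi$, and the reduction to $\H=\spn\Phi$ so that the transported representation lives on the original space.
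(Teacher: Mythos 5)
Your proposal is correct and mirrors the paper's own proof: both pivot on the equivalence (via Theorem~\ref{thm:milk}) between membership of the kernel $\overline{\varphi}=\overline{\varphi^*}$ in $L^2(K)^H$ and membership of the Gram matrix in $\A$, then use Theorem~\ref{thm:hgf} to realize $\G$ by some $(G,H)$-frame and transport that representation back by the unitary arising from equality of Gram matrices, verifying $H$-stability of $v$ and that the conjugated representation agrees with $\rho$ on the spanning vectors $\rho(k)v$. The only point worth spelling out (which the paper does in one line and your entries formula $\G_{k,k'}=\overline{\varphi(k^{-1}k')}$ already supplies) is that the implication $\tilde{\varphi}_\G\in L^2(K)^H\Rightarrow\G\in\A$ uses that both $\G$ and the unique $A\in\A$ with the same convolution kernel are $K$-circulant, so they coincide entrywise.
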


\begin{proof}
Let $\varphi$ be the function of positive type associated with $\Phi$, and let $\tilde{\varphi} = \overline{ \varphi^* }$ be the function
\[ \tilde{\varphi}(k) = \langle \rho(k) v, v \rangle \qquad (k \in K). \]
We will prove the statement with $\tilde{\varphi}$ in place of $\varphi$; this is equivalent since $\varphi$ and $\overline{\varphi^*}$ lie in $L^2(K)^H$ at precisely the same time.

The Gram matrix $\G$ of $\Phi$ has entries
\[ \G_{k_1,k_2} = \langle \rho(k_2) v, \rho(k_1) v \rangle = \langle \rho(k_1^{-1} k_2) v, v \rangle = \tilde{\varphi}(k_1^{-1} k_2) \qquad (k_1,k_2 \in K). \]
If there is a representation $\tilde{\rho} \colon G \to U(\H)$ extending $\rho$ for which $v$ is $H$-stable, then we can view $\Phi$ as a $(G,H)$-frame. After reindexing with the identification $K\cong X$, $\G$ becomes a matrix $\G' \in \A$, by Theorem~\ref{thm:hgf}. For any $k\in K$, we have $\tilde{\varphi}(k) = \G_{1_K,k} = \tilde{\varphi}_{\G'}(k)$, so $\tilde{\varphi} = \tilde{\varphi}_{\G'} \in L^2(K)^H$.

Conversely, when $\tilde{\varphi} \in L^2(K)^H$, we can find a matrix $\G' \in \A$ with $\tilde{\varphi}_{\G'} = \tilde{\varphi}$, so that
\[ \G'_{k_1\cdot x_0, k_2 \cdot x_0} = \G'_{x_0, k_1^{-1} k_2\cdot x_0} = \tilde{\varphi}(k_1^{-1} k_2) = \G_{k_1,k_2} \qquad (k_1,k_2 \in K). \]
In particular, $\G'$ is positive semidefinite. By Theorem~\ref{thm:hgf} again, there is a unitary representation $\rho' \colon G \to U(\H')$ and an $H$-stable vector $v' \in \H'$ for which $\Phi':= \{ \rho'(k) v' \}_{k\in K}$ is a frame with Gram matrix $\G$. Since $\Phi$ and $\Phi'$ have the same Gram matrix, there is a unitary $U\colon \H' \to \H$ with
\[ U \rho'(k)v' = \rho(k) v \qquad (k \in K). \]
Define $\tilde{\rho} \colon G \to U(\H)$ by setting
\[ \tilde{\rho}(g) = U \rho'(g) U^{-1} \qquad (g\in G). \]
For any $h\in H$, we have
\[ \tilde{\rho}(h) v = U \rho'(h) v' = U v' = v, \]
so $v$ is $H$-stable under $\tilde{\rho}$. 

It only remains to show that $\tilde{\rho}$ extends $\rho$. Since $\Phi$ is a frame, any vector $w\in \H$ can be written in the form
\[ w = \sum_{k\in K} c_k \rho(k) v \]
for some constants $c_k \in \C$.  For any $k,k' \in K$, we have
\[ \tilde{\rho}(k') \rho(k) v = U \rho'(k') \rho'(k) v' = U \rho'(k' k) v = \rho(k' k) v, \]
and therefore,
\[ \tilde{\rho}(k') w = \sum_{k\in K} c_k \tilde{\rho}(k') \rho(k) v = \sum_{k\in K} c_k \rho(k' k) v = \rho(k') w. \]
In other words, $\tilde{\rho}(k') = \rho(k')$, as desired.
\end{proof}

\begin{example}
Let $K$ be any finite group, and let $H = \Inn(K)$ be the group of inner automorphisms, given by conjugation in $K$. Then $K$ is a regular subgroup for the action of $G:=K\rtimes H$ on $G/H$, and $L^2(K)^H$ consists of functions constant on conjugacy classes in $K$. This is well known to be the center of the $*$-algebra $L^2(K)$. By Theorem~\ref{thm:milk}, the adjacency algebra of this action is commutative, and $(K \rtimes \Inn(K),\Inn(K))$ is a Gelfand pair.

If $\Phi = \{ \rho(k) v \}_{k\in K}$ is any $K$-frame, with Gram matrix $\G$ and function of positive type $\varphi$, then the reader can easily verify that $\varphi \in L^2(K)^{\Inn(K)}$ if and only if $\G$ is stable under the action of $K\times K$ described in Example~\ref{ex:cgf}. As explained in that example, this means precisely that $\G$ is the Gram matrix of a central $K$-frame. Applying Corollary~\ref{cor:milk}, we see that central $K$-frames are the same as tight $(K \rtimes \Inn(K),\Inn(K))$-frames.
\end{example}

\subsection{Projective reduction} \label{subsec:proj}

We conclude this section with a brief investigation of the following technique, applied in the setting of homogeneous frames.

\begin{defn}\label{def:PR}
Given a frame $\Phi=\{\phi_x\}_{x\in X}$, we introduce an equivalence relation on $X$ by saying that $x \sim y$ if there is a unimodular constant $\alpha_{x,y}$ such that $\phi_y = \alpha_{x,y} \phi_x$. A \emph{projective reduction} of $\Phi$ consists of one vector $\phi_x$ for each equivalence class $[x] \in X/\sim$.
\end{defn}

This concept is undoubtedly familiar to anyone who has studied SIC-POVMs \cite{Z}, since the full orbit of a vector under the Schr\"{o}dinger representation of the Heisenberg group contains several ``projective copies'' of the same frame, and has to be projectively reduced before it can possibly make a SIC-POVM.

Even if we only know the Gram matrix of $\Phi$, we can still find the Gram matrix of a projective reduction, since $\phi_x \sim \phi_y$ if and only if the $y$-th column of $\Phi^* \Phi$ is a unimodular multiple of the $x$-th column.

\begin{defn} \label{def:PS}
Let $G$ be a finite group, and let $\rho\colon G \to U(\H)$ be a unitary representation. The \emph{projective stabilizer} of a vector $v\in \H$ is the group
\[ K = \{ g \in G : \rho(g) v = \alpha(g) v \text{ for some constant }\alpha(g) \in \T\}. \]
\end{defn}

The name derives from the action of $G$ on one-dimensional subspaces of $\H$; when $v\neq 0$, $K$ is precisely the stabilizer of $\spn\{v\}$. Equivalently, $K$ is the subgroup of all $g\in G$ for which $v$ is an eigenvector of $\rho(g)$. If $\Phi = \{ \rho(g) v \}_{g\in G}$ is a $G$-frame and $\varphi$ is its function of positive type, then the conditions for equality in Cauchy-Schwarz easily imply that the projective stabilizer of $v$ is
\begin{equation} \label{eq:PS}
K = \{ g\in G : | \varphi(g) | = \varphi(1_G) \}.
\end{equation}

\begin{prop} \label{prop:PS}
Let $\Phi = \{ \rho(g) v\}_{g\in G}$ be a $G$-frame whose generator $v$ has projective stabilizer $K$. Choose representatives $g_1,\dotsc,g_n \in G$ for the left cosets of $K$ in $G$, and let $\Phi_{\text{red}} = \{ \rho(g_j) v\}_{j=1}^n$. Then $\Phi_{\text{red}}$ is a projective reduction of $\Phi$. Moreover, every projective reduction of $\Phi$ takes this form.
\end{prop}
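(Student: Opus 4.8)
The plan is to reduce both assertions to a single observation: the equivalence relation $\sim$ of Definition~\ref{def:PR}, restricted to the index set $X = G$ of the $G$-frame, coincides exactly with the relation of lying in the same left coset of $K$. Once this is in hand, both claims read off directly from the definition of a projective reduction.

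First I would unwind $\sim$ for the $G$-frame. Here the frame vectors are $\phi_g = \rho(g)v$, so for $g,g' \in G$ the relation $g \sim g'$ means $\rho(g')v = \alpha\,\rho(g)v$ for some $\alpha \in \T$. Applying the unitary $\rho(g)^{-1} = \rho(g^{-1})$ to both sides and invoking that $\rho$ is a homomorphism, this is equivalent to $\rho(g^{-1}g')v = \alpha v$, which is precisely the statement that $g^{-1}g' \in K$ by Definition~\ref{def:PS}. Hence $g \sim g'$ if and only if $g' \in gK$, so the equivalence classes in $X/\!\sim$ are exactly the left cosets of $K$ in $G$.

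With this identification, the first claim is immediate: a choice of left coset representatives $g_1,\dots,g_n$ is the same thing as a choice of one index from each equivalence class, so $\Phi_{\text{red}} = \{\rho(g_j)v\}_{j=1}^n$ is by definition a projective reduction of $\Phi$. For the converse, any projective reduction selects one vector $\phi_g$ from each class $[g] \in X/\!\sim$; since those classes are precisely the cosets $gK$, such a selection amounts to picking coset representatives, and the resulting frame has the stated form.

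I do not expect a genuine obstacle—the argument is essentially a translation between the language of projective stabilizers and that of cosets. The only points needing a word of care are the harmless standing assumption $v \neq 0$ (so that $\Phi$ is genuinely a frame and $K$ is the stabilizer of $\spn\{v\}$), and the bookkeeping that the unimodular scalar transfers correctly through the unitary $\rho(g^{-1})$; both are routine. As a sanity check one could confirm compatibility with the characterization \eqref{eq:PS}, but the cleanest route runs directly through Definition~\ref{def:PS}.
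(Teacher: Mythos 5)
Your proposal is correct and follows essentially the same route as the paper: identify the equivalence classes of $\sim$ with the left cosets of $K$ by noting that $g \sim g'$ iff $\rho(g^{-1}g')v = \alpha v$ for some $\alpha \in \T$, i.e.\ $g^{-1}g' \in K$, after which both claims are immediate. Your added remarks (the explicit transfer of the scalar through $\rho(g^{-1})$ and the converse bookkeeping) are fine elaborations of steps the paper leaves implicit.
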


\begin{proof}
Let $\sim$ be the equivalence relation from Definition~\ref{def:PR}. For $g,h\in G$, we have $g\sim h$ if and only if there is a unimodular constant $\alpha_{g,h} \in \T$ such that $\rho(g) v = \alpha_{g,h} \rho(h) v$, or equivalently, $\rho(h^{-1} g) v = \alpha_{g,h} v$. That happens if and only if $h^{-1}g \in K$. Thus, the equivalence classes for $\sim$ are precisely the left cosets of $K$ in $G$.
\end{proof}

For general frames, it is easy to construct examples where projective reduction fails to preserve tightness. For group frames, however, the situation is much nicer.

\begin{prop} \label{prop:PRG}
Let $\Phi = \{ \rho(g) v \}_{g\in G}$ be a $G$-frame whose generator $v$ has projective stabilizer $K$. Let $H$ be a subgroup of $K$, and let $g_1,\dotsc,g_n \in G$ be left coset representatives for $H$ in $G$. Then $\Phi$ has frame bounds $A,B$ if and only if $\Phi' := \{ \rho(g_j) v\}_{j=1}^n$ has frame bounds $A/|H|,B/|H|$. In particular, $\Phi$ is tight if and only if $\Phi'$ is, too.
\end{prop}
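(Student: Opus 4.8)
The plan is to reduce everything to a single exact identity relating the two frame sums, after which both frame-bound inequalities (and hence the ``if and only if'') fall out by dividing by $|H|$. The crucial input is the hypothesis $H \subset K$: by the definition of the projective stabilizer, for each $h \in H$ there is a unimodular scalar $\alpha(h) \in \T$ with $\rho(h)v = \alpha(h)v$. Consequently, within any left coset $g_j H$, every vector is a unimodular multiple of the representative's vector, namely $\rho(g_j h)v = \rho(g_j)\rho(h)v = \alpha(h)\,\rho(g_j)v$ for all $h \in H$.

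First I would partition $G$ into the left cosets $g_1 H, \dotsc, g_n H$ and expand, for an arbitrary $\psi \in \H$,
\[ \sum_{g \in G} |\langle \psi, \rho(g)v\rangle|^2 = \sum_{j=1}^n \sum_{h \in H} |\langle \psi, \rho(g_j h)v\rangle|^2. \]
Substituting $\rho(g_j h)v = \alpha(h)\rho(g_j)v$ and using $|\alpha(h)| = 1$, each inner summand collapses to $|\langle \psi, \rho(g_j)v\rangle|^2$, independent of $h$. The inner sum over $h \in H$ therefore contributes exactly $|H|$ identical terms, yielding the identity
\[ \sum_{g \in G} |\langle \psi, \rho(g)v\rangle|^2 = |H| \sum_{j=1}^n |\langle \psi, \rho(g_j)v\rangle|^2. \]

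With this identity in hand, the conclusion is immediate. The frame inequality $A\Norm{\psi}^2 \leq \sum_{g\in G}|\langle \psi, \rho(g)v\rangle|^2 \leq B\Norm{\psi}^2$ holds for all $\psi$ if and only if, after dividing through by $|H|$, we have $\tfrac{A}{|H|}\Norm{\psi}^2 \leq \sum_{j=1}^n |\langle \psi, \rho(g_j)v\rangle|^2 \leq \tfrac{B}{|H|}\Norm{\psi}^2$ for all $\psi$, which is exactly the statement that $\Phi'$ has frame bounds $A/|H|$ and $B/|H|$. Taking $A = B$ recovers the tightness claim.

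There is no substantive obstacle here; the only point requiring a little care is confirming that the unimodular factors genuinely drop out of the modulus squared, which is where the hypothesis $H \subset K$ (rather than merely $H \subset G$) is essential. One should also note that the identity is independent of the choice of coset representatives $g_j$, since replacing $g_j$ by $g_j h'$ for $h' \in H$ only multiplies $\rho(g_j)v$ by a unimodular scalar and hence leaves each $|\langle \psi, \rho(g_j)v\rangle|^2$ unchanged.
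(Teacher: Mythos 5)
Your proof is correct and follows exactly the paper's argument: the same decomposition of $G$ into left cosets of $H$, the same use of the unimodular scalars $\alpha(h)$ from the projective stabilizer hypothesis to collapse each coset's contribution to $|H|$ identical terms, and the same division by $|H|$ to transfer the frame bounds. Your closing remark on independence from the choice of coset representatives is a small, correct addition beyond what the paper states explicitly.
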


Consequently, the projective reduction of a tight group frame is tight, and vice versa.

\begin{proof}
Let $\alpha \colon K \to \T$ be as in Definition~\ref{def:PS}. For any $w\in \H$, we have
\begin{align*}
\sum_{g\in G} | \langle w, \rho(g) v \rangle |^2 &= \sum_{j=1}^n \sum_{h \in H} | \langle w, \rho(g_j h) v \rangle |^2 = \sum_{j=1}^n \sum_{h \in H} | \langle w, \rho(g_j) \alpha(h) v \rangle |^2 \\
&= |H| \sum_{j=1}^n |\langle w, \rho(g_j) v \rangle |^2,
\end{align*}
and the proposition follows immediately.
\end{proof}

\begin{cor}
Let $\A$ be the adjacency algebra of a Schurian association scheme. Let $\mathcal{G} \in \A$, and let $\Phi$ be a frame with $\Phi^* \Phi = \mathcal{G}$. If $\Phi$ is tight, then so is its projective reduction, and vice versa.
\end{cor}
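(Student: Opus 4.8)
The plan is to collapse everything onto a single ambient $G$-frame and then invoke Proposition~\ref{prop:PRG} twice. First I would note that both the tightness of $\Phi$ and the formation of a projective reduction depend only on the Gram matrix $\mathcal{G}=\Phi^*\Phi$, exactly as observed just after Definition~\ref{def:PR} (two columns are projectively identified precisely when one is a unimodular multiple of the other). I am therefore free to replace $\Phi$ by any frame with the same Gram matrix. Since $\mathcal{G}\in\A$ is positive semidefinite, Theorem~\ref{thm:hgf} lets me take $\Phi$ to be a genuine $(G,H)$-frame $\Phi=\{\rho(g)v\}_{gH\in G/H}$, where $\rho\colon G\to U(\H)$ is unitary, $H$ is the stabilizer of the base point, and $v$ is $H$-stable.

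Next I would pass to the ambient $G$-frame $\Psi:=\{\rho(g)v\}_{g\in G}$ and let $K$ be the projective stabilizer of $v$. Because $v$ is $H$-stable we have $\rho(h)v=v$ for all $h\in H$, so $H\subset K$. Applying Proposition~\ref{prop:PRG} to the subgroup $H\subset K$ then shows that $\Psi$ is tight if and only if $\Phi$ is tight, the frame bounds differing only by the factor $|H|$.

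The third step is to identify the projective reduction of $\Phi$ with that of $\Psi$. By Proposition~\ref{prop:PS} the $\sim$-classes among the vectors $\rho(g)v$ are exactly the left cosets of $K$; since $H\subset K$, every $K$-coset is a union of $H$-cosets, so the classes appearing among the $\Phi$-vectors indexed by $G/H$ are again the $K$-cosets. Choosing one representative per $K$-coset therefore produces the very same reduced frame $\Phi_{\text{red}}=\{\rho(g_j)v\}$ for both $\Phi$ and $\Psi$. I would then apply Proposition~\ref{prop:PRG} a second time, now to the full subgroup $K$ itself, obtaining that $\Psi$ is tight exactly when $\Phi_{\text{red}}$ is tight. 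Chaining the two equivalences yields that $\Phi$ is tight if and only if $\Phi_{\text{red}}$ is tight, which is the claim.

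The one point requiring care is bookkeeping rather than real mathematics: I must confirm that the purely Gram-matrix notion of projective reduction in Definition~\ref{def:PR} coincides with the coset-representative reduction of Proposition~\ref{prop:PS}, and that the conclusion is independent of the chosen representatives. The latter holds because any two projective reductions differ only by column-wise unimodular rescalings, and tightness is a property of the Gram matrix; so the double application of Proposition~\ref{prop:PRG} around the common reduction $\Phi_{\text{red}}$ closes the argument with essentially no computation.
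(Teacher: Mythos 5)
Your proof is correct and takes essentially the same route as the paper's: reduce via Theorem~\ref{thm:hgf} to a genuine $(G,H)$-frame, pass to the full $G$-orbit, and apply Proposition~\ref{prop:PRG} twice (once for $H$ inside the projective stabilizer $K$, once for $K$ itself) together with Proposition~\ref{prop:PS} to identify the common projective reduction. The extra bookkeeping you spell out---that the $\sim$-classes among the vectors indexed by $G/H$ are exactly the left $K$-cosets, and that the conclusion is independent of the chosen representatives---is left implicit in the paper but is exactly the right justification.
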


\begin{proof}
Let $G$ be the permutation group that produced the association scheme, and let $H \subset G$ be the stabilizer of a point. By Theorem~\ref{thm:hgf}, we may assume without loss of generality that $\Phi = \{ \rho(g) v \}_{gH \in G/H}$ for some unitary representation $\rho$ and some $H$-stable vector $v$, whose projective stabilizer necessarily contains $H$ as a subgroup. Let $\tilde{\Phi} = \{ \rho(g) v \}_{g\in G}$ be the full orbit of $v$. By Proposition~\ref{prop:PRG}, $\Phi$ is tight if and only if $\tilde{\Phi}$ is, too. Moreover, any projective reduction $\Phi_{\text{red}}$ of $\Phi$ is also a projective reduction of $\tilde{\Phi}$. Using Proposition~\ref{prop:PS}, it follows that $\Phi$, $\tilde{\Phi}$, and $\Phi_{\text{red}}$ are tight at the same time.
\end{proof}

\begin{cor}
Let $G$ be a finite group acting transitively on a set $X$, with notation as in Section~\ref{sec:pre}. Given any subset $D \subset \{0,\dotsc,r\}$, let $\Phi_D$ be the frame constructed in Example~\ref{ex:gphg}, and let
\[ \varphi_D := \frac{1}{|X|} \sum_{j\in D} m_j \omega_j. \]
Then the following are equivalent:
\begin{enumerate}[(i)]
\item Any projective reduction of $\Phi_D$ is an equiangular tight frame.
\item $\varphi_D$ takes exactly two absolute values.
\end{enumerate}
\end{cor}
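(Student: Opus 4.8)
The plan is to push everything through the \emph{function of positive type} $\varphi_D$. By Example~\ref{ex:gphg} together with \eqref{eq:spd}, the frame $\Phi_D=\{\rho(g)v\}_{gH\in G/H}$ has generator $v$ satisfying $\langle v,\rho(g)v\rangle=\frac{1}{|X|}\sum_{j\in D}m_j\omega_j(g)=\varphi_D(g)$, so $\varphi_D$ is precisely the function of positive type of the full orbit $\tilde\Phi_D=\{\rho(g)v\}_{g\in G}$; in particular $\varphi_D(1_G)=\Norm{v}^2>0$. Since $v$ is $H$-stable, the projective stabilizer $K$ of $v$ contains $H$, and a projective reduction of $\Phi_D$ coincides with one of $\tilde\Phi_D$. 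I will therefore phrase the argument in terms of $\tilde\Phi_D$ and transport conclusions back to $\Phi_D$.

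First I would dispose of two of the three ETF conditions. Because $\G_D$ is a projection, $\Phi_D$ is Parseval, hence tight, so the preceding corollary guarantees that every projective reduction of $\Phi_D$ is tight. Because $\rho$ is unitary, every frame vector $\rho(g)v$ has the common norm $\Norm{v}$, so the equal-length condition holds automatically. Consequently a projective reduction is an ETF if and only if it is \emph{equiangular}, and condition (i) collapses to a single statement about constant modulus of off-diagonal Gram entries.

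Next I would pin down the shape of the reduction. By \eqref{eq:PS}, the projective stabilizer is $K=\{g\in G:|\varphi_D(g)|=\varphi_D(1_G)\}$, and by Proposition~\ref{prop:PS} a projective reduction has the form $\Phi_{\mathrm{red}}=\{\rho(g_j)v\}_{j=1}^n$ with $g_1,\dotsc,g_n$ running over left coset representatives of $K$. Its off-diagonal entries are $\langle\rho(g_i)v,\rho(g_j)v\rangle=\varphi_D(g_i^{-1}g_j)$, where $g_i^{-1}g_j\notin K$ whenever $i\neq j$. So equiangularity asserts exactly that $|\varphi_D|$ is constant on the difference set $\{g_i^{-1}g_j:i\neq j\}$.

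The remaining work is to upgrade this to the clean global statement (ii), and this is where I expect the only real subtlety to lie. The key computation is that $|\varphi_D|$ is right-$K$-invariant: for $k\in K$ we have $\rho(k)v=\alpha(k)v$ with $|\alpha(k)|=1$, whence $\varphi_D(gk)=\alpha(k)\varphi_D(g)$ and $|\varphi_D(gk)|=|\varphi_D(g)|$. Choosing $g_1=1_G$ as the representative of the coset $K$, equiangularity forces $|\varphi_D(g_j)|$ to equal a common value $c$ for every $j\geq 2$, and right-$K$-invariance then propagates this value across each nontrivial coset $g_jK$, hence over all of $G\setminus K$. Since the strict Cauchy--Schwarz inequality gives $|\varphi_D|<\Norm{v}^2$ off $K$ and $|\varphi_D|=\Norm{v}^2$ on $K$, we conclude that $\varphi_D$ takes exactly the two absolute values $\Norm{v}^2$ and $c$, which is (ii); the converse is immediate, since constancy of $|\varphi_D|$ on $G\setminus K$ makes every off-diagonal entry $|\varphi_D(g_i^{-1}g_j)|$ equal to $c$. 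The one boundary case to flag is $K=G$ (a single line), which is degenerate and should be excluded. The heart of the argument is thus the passage from the a priori weaker equiangularity hypothesis---constancy of $|\varphi_D|$ only on the differences $g_i^{-1}g_j$---to genuine constancy on all of $G\setminus K$, effected by the choice $g_1=1_G$ and the right-$K$-invariance of $|\varphi_D|$.
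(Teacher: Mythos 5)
Your proposal is correct and follows essentially the same route as the paper's proof: both identify $\varphi_D$ as the function of positive type of the full orbit $\tilde{\Phi}_D$, invoke Propositions~\ref{prop:PS} and~\ref{prop:PRG} to reduce to constancy of the off-diagonal moduli $|\varphi_D(g_i^{-1}g_j)|$, and then exploit the choice $g_1=1_G$ together with the unimodular action of $K$ on $v$ (the paper phrases this step contrapositively, writing $g=g_ik$, $g'=g_jk'$, where you argue directly via right-$K$-invariance of $|\varphi_D|$). Your flagging of the degenerate case $K=G$ is a small point the paper leaves implicit, but it does not change the substance of the argument.
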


\begin{proof}
As explained in Example~\ref{ex:gphg}, the Gram matrix of $\Phi_D$ has entries 
\[ (\G_D)_{gH,hH} = \varphi_D(h^{-1} g) \qquad (g,h \in G). \]
Thus, $\varphi_D$ is the function of positive type associated with the $G$-frame $\tilde{\Phi}_D := \{ \rho(g) v \}_{g\in G}$.

Following Proposition~\ref{prop:PS}, we let
\[ K = \left\{ g \in G : |\varphi_D(g)| = \varphi_D(1_G) \right\} \]
be the projective stabilizer of $v$, and choose representatives $1_G = g_1,\dotsc,g_m\in G$ for the left cosets of $K$ in $G$. Now $\Phi_D^{\text{red}}:=\{\rho(g_i) v\}_{i=1}^m$ is a projective reduction of $\tilde{\Phi}_D$, hence also of $\Phi_D$. It is tight by Proposition~\ref{prop:PRG}, and its Gram matrix $\G_D^{\text{red}}$ has entries
\[ (\G_D^{\text{red}})_{i,j} = \langle \rho(g_j) v, \rho(g_i) v \rangle = \langle v, \rho(g_j^{-1} g_i) v \rangle = \varphi_D(g_j^{-1} g_i) \qquad (1 \leq i,j \leq m). \]

If $\varphi_D$ takes exactly two absolute values, then $\left|(\G_D^{\text{red}})_{i,j}\right|$ is constant across all pairs $i \neq j$, since $g_j^{-1} g_i \notin K$. Thus, $\Phi_D^{\text{red}}$ is an ETF. On the other hand if $|\varphi_D(g)| \neq | \varphi_D(g')|$ for some $g,g' \in G \setminus K$, then we can find $i,j \neq 1$ and $k,k' \in K$ for which $g_i k$ and $g' = g_j k'$, so that
\[ |\langle \rho(g_1) v, \rho(g_i) v \rangle | = | \langle v, \rho(g_i k) v \rangle | = | \varphi_D(g)| \neq | \varphi_D(g')| = | \langle \rho(g_1) v, \rho(g_j) v \rangle |, \]
and $\Phi_D^{\text{red}}$ is not equiangular.
\end{proof}

\section{Examples}
Having established the basic theory of homogeneous frames, we now give several examples to demonstrate the fruitfulness of this approach for constructing optimal line packings. In every example below, the bulk of the work lies in identifying a transitive action of a finite group (or equivalently, a subgroup corresponding to its stabilizer). Once this is done, the spherical functions identify a handful of line packings for us, and we just have to pick out the ones with low coherence. The code that produced these examples is available online \cite{github2}.

\begin{example}
Let
\[ T = \begin{bmatrix}
0 & 1 \\
1 & 0
\end{bmatrix} \qquad \text{and} \qquad  M = \begin{bmatrix}
1 & 0 \\
0 & -1
\end{bmatrix}, \]
and let
\[ K = \{ i^k T^{t_1} M^{m_1} \otimes T^{t_2} M^{m_2} \otimes T^{t_3} M^{m_3} : 0 \leq k \leq 3,\ 0 \leq t_i, m_i \leq 1 \} \]
be the group generated by $iI_8$ and all three-fold tensor products of powers of $T$ and $M$. This is a slightly enlarged version of the three-qubit Weyl-Heisenberg group. Zauner \cite{Z} observed that the vector
\[ v = \frac{1}{\sqrt{6}} \begin{bmatrix} 1+i, & 0, & -1, & 1, & -i, & -1, & 0, & 0 \end{bmatrix}^T \]
generates a $K$-frame $\Phi:= \{ g v \}_{g\in K}$ whose projective reduction is an $8\times 64$ ETF, equivalent to Hoggar's 64 lines \cite{Hog78,Hog81}. Zhu \cite{Zhu12} found that when $\omega = \exp(2\pi i /8)$, both of the unitary matrices
\[ U = \frac{\omega}{\sqrt{2}} \left[ \begin{array}{rrrrrrrr}
0 & 0 & -1 & 0 & i & 0 & 0 & 0 \\
0 & 0 & -i & 0 & 1 & 0 & 0 & 0 \\
0 & 0 & 0 & i & 0 & 1 & 0 & 0 \\
0 & 0 & 0 & 1 & 0 & i & 0 & 0 \\
-1 & 0 & 0 & 0 & 0 & 0 & i & 0 \\
i & 0 & 0 & 0 & 0 & 0 & -1 & 0 \\
0 & i & 0 & 0 & 0 & 0 & 0 & 1 \\
0 & -1 & 0 & 0 & 0 & 0 & 0 & -i
\end{array} \right] \]
and
\[ V = \frac{\omega}{\sqrt{2}} \left[ \begin{array}{rrrrrrrr}
0 & 0 & 0 & 0 & i & -1 & 0 & 0 \\
0 & 0 & 0 & 0 & -i & -1 & 0 & 0 \\
i & 1 & 0 & 0 & 0 & 0 & 0 & 0 \\
-i & 1 & 0 & 0 & 0 & 0 & 0 & 0 \\
0 & 0 & i & -1 & 0 & 0 & 0 & 0 \\
0 & 0 & -i & -1 & 0 & 0 & 0 & 0 \\
0 & 0 & 0 & 0 & 0 & 0 & i & 1 \\
0 & 0 & 0 & 0 & 0 & 0 & -i & 1
\end{array} \right] \]
fix $v$. The same is therefore true of every matrix in $H := \langle U, V \rangle$, which turns out to be isomorphic to the simple group $PSU_3(\F_3)$.

Using GAP \cite{gap}, we verify that $H$ normalizes $K$ while intersecting it trivially, and that $(K\rtimes H, H)$ is a Gelfand pair. We can view $\Phi$ as a $(K\rtimes H,H)$-frame, one of the finitely many distinguished by our pair. To find the others, we compute the spherical functions:

\begin{center}
\resizebox{\hsize}{!}{
\begin{tabular}{r|rrrrrrrr|r}
$g$ & $I_8$ & $M\otimes I_2 \otimes I_2$ & $T\otimes I_2 \otimes I_2$ & $TM \otimes I_2 \otimes I_2$ & $I_2 \otimes TM \otimes I_2$ & $-I_8$ & $iI_8$ & $-iI_8$ & $m_j$ \\ \hline
$|HgH|$ & $6048$ & $381024$ & $381024$ & $381024$ & $381024$ & $6048$ & $6048$ & $6048$ & \\ \hline 
$\omega_1(g)$ & 1 & 1 & 1 & 1 & 1 & 1 & 1 & 1 & 1 \\
$\omega_2(g)$ & $1$ & $1/3$ & $-1/3$ & $i/3$ & $-i/3$ & $-1$ & $i$ & $-i$ & $8$ \\
$\omega_3(g)$ & $1$ & $1/3$ & $-1/3$ & $-i/3$ & $i/3$ & $-1$ & $-i$ & $i$ & $8$ \\
$\omega_4(g)$ & $1$ & $-1/7$ & $-1/7$ & $1/7$ & $1/7$ & $1$ & $-1$ & $-1$ & $28$ \\
$\omega_5(g)$ & $1$ & $1/9$ & $1/9$ & $-1/9$ & $-1/9$ & $1$ & $-1$ & $-1$ & $36$ \\
$\omega_6(g)$ & $1$ & $-1/21$ & $1/21$ & $-i/21$ & $i/21$ & $-1$ & $i$ & $-i$ & $56$ \\
$\omega_7(g)$ & $1$ & $-1/21$ & $1/21$ & $i/21$ & $-i/21$ & $-1$ & $-i$ & $i$ & $56$\\
$\omega_8(g)$ & $1$ & $-1/63$ & $-1/63$ & $-1/63$ & $-1/63$ & $1$ & $1$ & $1$ & $63$\\
\end{tabular}}
\end{center}

Every nontrivial spherical function takes exactly two absolute values, so it produces an ETF after projective reduction. In each case, the corresponding projective stabilizer has order $4\times 6{,}048 = 24{,}192$, while the full group has order $1{,}548{,}288$. Thus, the projective reductions have $1{,}548{,}288/24{,}192=64$ vectors, as expected. In this way, we obtain Hoggar's complex $8\times 64$ ETF, a real $28 \times 64$ ETF, and Naimark complements of both.
\end{example}

\begin{example}
The following was privately suggested to the authors by Prof.\ Henry Cohn. Let $G= SL_2(\F_8)$, acting doubly transitively on the set $S$ of one-dimensional subspaces of $\F_8^2$. Then $G$ also acts transitively on 
\[ X = \{ (\ell_1,\ell_2) \in S^2 : \ell_1 \neq \ell_2 \} \]
through the coordinate-wise action. Using GAP, we observe that exactly one constituent of the permutation character is real-valued with degree seven, and that it occurs with multiplicity one. After projective reduction, its spherical function describes $36$ unit norm vectors in $\R^7$ with coherence $1/3$, the minimum currently known for such a packing. Lines achieving this value appear in Sloane's database of putatively optimal packings \cite{Sloane}, and in \cite{FJM17}, where they are described as ``elusive''. We do not know if this arrangement is optimal.
\end{example}

\begin{example}
The Mathieu group $M_{11}$ acts triply transitively on a twelve-point set $S=\{p_1,\dotsc,p_{12}\}$, as implemented in GAP by \texttt{TransitiveGroup(12,272)}. Following the previous example, we consider the action of $M_{11}$ on
\[ X = \{ (p_i,p_j) \in S^2 : i \neq j\}. \]
Exactly one constituent of the permutation character has degree ten, and it occurs with multiplicity one. Let $\omega$ be its spherical function, and let $\omega_0$ be the trivial spherical function. Taking $D=\{\omega_0,\omega\}$ in Proposition~\ref{prop:GD}, and projectively reducing the result, we obtain a tight frame of $66$ vectors in $\R^{11}$ with coherence $1/3$. This beats the previous record for 66 lines in $\R^{11}$, as recorded in Sloane's database \cite{Sloane}. Unlike the previous record holder, this packing is a tight frame. We do not know if it is optimal. The reader can find its Gram matrix online \cite{github2}.
\end{example}

\begin{example}
Let $L$ be a Sylow 2-subgroup of $SL_2(\F_5)$, and let $G_0 = N_{GL_2(\F_5)}(L)$ be its normalizer in $GL_2(\F_5)$. Computing in GAP, we find that the affine action of $G:= \F_5^2 \rtimes G_0$ on $\F_5^2$ is doubly transitive. Once again, we let $G$ act on
\[ X = \{ (x,y) \in \F_5^2 \times \F_5^2 : x \neq y \}. \]
It turns out that if we let $K_0 = N_{SL_2(\F_5)}(L) \cong SL_2(\F_3)$, then 
\[ K := \F_5^2 \rtimes K_0 \cong \F_5^2 \rtimes SL_2(\F_3) \] 
is a regular subgroup for this action.

Among the spherical functions, there is exactly one pair of complex conjugates  $\omega$ and $\overline{\omega}$ corresponding to characters of degree two, and each occurs with multiplicity one. Taking either of $D = \{\omega\}$ or $D = \{\overline{\omega}\}$ in Proposition~\ref{prop:GD} produces, after projective reduction, six lines in $\C^2$ with coherence $1/\sqrt{2}$. On the other hand, taking $D = \{ \omega, \overline{\omega} \}$ produces 12 lines in $\R^4$ with coherence $1/2$. Both of these packings achieve equality in the orthoplex and Levenstein bounds \cite{ConwayHS:96,levenshtein1982bounds}, and are therefore optimal. In fact, both are mutually unbiased bases that occur as projective reductions of $K$-frames. Their Gram matrices are shown in Figures~\ref{fig:4x12} and~\ref{fig:2x6}.
\end{example}

\begin{figure}
\[ \frac{1}{2} \left[ \begin{array}{rrrrrrrrrrrr}
2 & 0 & 0 & 0 & - & + & + & + & - & - & - & - \\
0 & 2 & 0 & 0 & - & - & + & - & + & - & - & + \\
0 & 0 & 2 & 0 & - & - & - & + & + & + & - & - \\
0 & 0 & 0 & 2 & - & + & - & - & + & - & + & - \\
- & - & - & - & 2 & 0 & 0 & 0 & - & + & + & + \\
+ & - & - & + & 0 & 2 & 0 & 0 & - & - & + & - \\
+ & + & - & - & 0 & 0 & 2 & 0 & - & - & - & + \\
+ & - & + & - & 0 & 0 & 0 & 2 & - & + & - & - \\
- & + & + & + & - & - & - & - & 2 & 0 & 0 & 0 \\
- & - & + & - & + & - & - & + & 0 & 2 & 0 & 0 \\
- & - & - & + & + & + & - & - & 0 & 0 & 2 & 0 \\
- & + & - & - & + & - & + & - & 0 & 0 & 0 & 2 \\
\end{array} \right] \]
\caption{Three mutually unbiased bases in $\R^4$ that occur as a projectively reduced $ \F_5^2 \rtimes SL_2(\F_3)$-frame.} \label{fig:4x12}
\end{figure}

\begin{figure}
\[ \frac{1}{2} \left[ \begin{array}{rrrrrr}
2 & 0 & -1+i & 1+i & -1-i & -1-i \\
0 & 2 & -1-i & 1-i & 1+i & -1-i \\
-1-i & -1+i & 2 & 0 & -1+i & 1+i \\
1-i & 1+i & 0 & 2 & -1+i & -1-i \\
-1+i & 1-i & -1-i & -1-i & 2 & 0 \\
-1+i & -1+i & 1-i & -1+i & 0 & 2 \\
\end{array} \right] \]
\caption{Three mutually unbiased bases in $\C^2$ that occur as a projectively reduced $\F_5^2 \rtimes SL_2(\F_3)$-frame.} \label{fig:2x6}
\end{figure}

\begin{example}
GAP contains a library of all transitive permutation groups of degree $m \leq 30$, up to conjugacy within $S_m$. We made a brute force search of this library for actions that produce homogeneous ETFs. The prodigious results for real and complex ETFs appear in Tables~\ref{tbl:rETFs} and~\ref{tbl:cETFs}, respectively. In both tables, the parameters $d$ and $n$ indicate an ETF of $n$ vectors in either $\R^d$ or $\C^d$, and the remaining parameters ($m$, $t$ and $k$) describe a transitive group action that can be used to create the Gram matrix of such an ETF. Namely, let $G$ be the group created by the GAP command \texttt{G:=TransitiveGroup(m,k)}, which acts $t$-transitively on the set $S=\{1,\dotsc,m\}$. When $t=1$ we take $X=S$, and when $t=2$ we take
\[ X = \{ (i,j) \in S^2 : i \neq j \}. \]
The action of $G$ on $X$ is transitive, and some subset $D$ of the corresponding spherical functions creates a projection $\G_D$ whose projective reduction is the Gram matrix of a $d\times n$ ETF. We implemented  a GAP function to produce that Gram matrix from the parameters $(d,n,m,t,k)$; the reader can find it online \cite{github2}.

For the sake of computational efficiency, we restricted our attention to groups of order $|G| \leq 100,000$ with degree $m \neq 24$, and considered only spherical functions corresponding to constituents of the permutation character having multiplicity one. When there were more than 20 such spherical functions, we ignored the action completely.

Our results are significant for demonstrating that the spherical function construction of Proposition~\ref{prop:GD}, combined with projective reduction, accounts for all known sizes $d \times n$ of real and complex ETFs with $n \leq 30$, with four possible exceptions \cite{FMTable}.
In particular, Tables~\ref{tbl:rETFs} and~\ref{tbl:cETFs} capture all known sizes in this range except $4 \times 16$, $12\times 16$, $5 \times 25$, $20 \times 25$, $10\times25$ and $15\times 25$.
These last two are part of an infinite family that we construct later in Theorem~\ref{thm:HETF}.
While these also arise from a transitive group action, the size of the underlying set, namely $m=125>30$, was beyond the scope of our GAP search.
The remaining sizes are SIC-POVMs and their Naimark complements, themselves constructed as projective reductions of Heisenberg group frames; therefore, \emph{every} known ETF size $d \times n$ with $n \leq 30$ can be realized as a projective reduction of a homogeneous frame.
\end{example}

\setlength{\columnseprule}{0.4pt}
\begin{table}[t]
\caption{Real Homogeneous ETFs} \label{tbl:rETFs}
\tiny
\begin{center}
\begin{tabularx}{\textwidth}{rrrrX}
\footnotesize{$d$} & \footnotesize{$n$} & \footnotesize{$m$} & \footnotesize{$t$} & \footnotesize{$k$} \\ \hline
3&6&12&1&33, 76\\ 3&6&25&2&74, 81\\ 5&10&10&1&7, 13\\ 5&10&20&1&15, 30, 31, 32, 35, 36, 62, 65, 89, 149, 152, 172, 177, 198, 218, 219, 222, 223, 224, 225, 228, 280, 281, 285, 289, 291, 365, 570, 674, 676, 693\\ 5&10&30&1&45, 94, 98, 101, 174, 549, 558, 562, 717, 719, 727, 728, 731, 732, 733, 828, 829, 832, 833, 837, 844, 910, 919, 1015, 1021, 1023, 1024, 1027, 1031, 1032, 1033, 1036, 1165, 1166, 1167, 1168, 1246, 1255, 1378, 1381, 1382, 1385, 1387, 1390, 1392, 1395\\ 6&16&16&1&2, 3, 4, 19, 25, 28, 34, 46, 51, 57, 58, 61, 62, 63, 64, 109, 135, 143, 147, 166, 178, 179, 181, 182, 183, 185, 186, 191, 193, 194, 195, 395, 414, 415, 421, 425, 431, 436, 444, 708, 709, 710, 711, 748, 1030, 1033, 1034, 1081, 1294, 1328\\ 7&14&28&1&120, 199\\ 7&28&28&1&27, 32, 46, 159, 433, 502\\ 10&16&16&1&2, 3, 4, 19, 25, 28, 34, 46, 51, 57, 58, 61, 62, 63, 64, 109, 135, 143, 147, 166, 178, 179, 181, 182, 183, 185, 186, 191, 193, 194, 195, 395, 414, 415, 421, 425, 431, 436, 444, 708, 709, 710, 711, 748, 1030, 1033, 1034, 1081, 1294, 1328\\ 21&28&8&2&43, 48, 49, 50\\ 21&28&28&1&27, 46, 159, 433, 502\\ 21&36&9&2&27, 32\\ 85&136&17&2&6, 7, 8 
\end{tabularx}
\end{center}
\end{table}

\begin{table}[H]
\caption{Complex Homogeneous ETFs} \label{tbl:cETFs}
\begin{multicols}{2}
\begin{center}
\tiny
\begin{tabularx}{0.48\textwidth}{rrrrX}
\footnotesize{$d$} & \footnotesize{$n$} & \footnotesize{$m$} & \footnotesize{$t$} & \footnotesize{$k$} \\ \hline
2&4&8&1&12\\ 2&4&9&2&23\\ 2&4&16&1&59, 60, 438, 439, 726, 728, 729, 732, 1038, 1040, 1542, 1670\\ 3&7&7&1&1, 3\\ 3&7&8&2&25, 36\\ 3&7&14&1&1, 5, 6, 9, 11, 18, 21, 29, 35, 44\\ 3&7&21&1&6, 7, 11, 39, 50, 59, 60, 61, 79, 80, 86, 100\\ 3&7&28&1&11, 13, 14, 16, 17, 19, 20, 22, 27, 31, 37, 38, 39, 40, 44, 59, 60, 61, 62, 63, 64, 65, 66, 69, 79, 85, 89, 97, 99, 100, 101, 102, 103, 104, 110, 111, 112, 113, 114, 115, 116, 117, 118, 145, 150, 151, 154, 155, 156, 157, 160, 171, 173, 174, 177, 178, 179, 180, 181, 182, 187, 188, 189, 190, 191, 192, 216, 217, 220, 221, 222, 223, 226, 235, 236, 238, 255, 260, 262, 263, 264, 265, 266, 267, 268, 273, 286, 289, 290, 293, 294, 297, 298, 299, 300, 313, 314, 318, 319, 320, 321, 332, 333, 334, 335, 336, 337, 338, 339, 340, 341, 342, 343, 344, 345, 346, 365, 370, 372, 375, 376, 385, 389, 392, 398, 399, 400, 401, 402, 403, 404, 405, 406, 407, 408, 409, 410, 411, 412, 413, 436, 437, 444, 445, 447, 460, 461, 469, 470, 471, 472, 473, 474, 475, 476, 477, 478, 488, 503, 504, 507, 508, 512, 513, 514, 538, 541, 547, 549, 550, 551, 552, 553, 554, 555, 556, 557, 558, 559, 560, 561, 562, 563, 564, 565, 566, 567, 568, 569, 570, 571, 572, 588, 589, 590, 591, 631, 634, 635, 643, 648, 649, 656, 657\\ 3&9&27&1&6, 32, 50, 83, 212\\ 4&7&7&1&1, 3\\ 4&7&8&2&25, 36\\ 4&7&14&1&1, 5, 6, 9, 11, 18, 21, 29, 35, 44\\ 4&7&21&1&6, 7, 11, 39, 50, 59, 60, 61, 79, 80, 86, 100\\ 
4&7&28&1&11, 13, 14, 16, 17, 19, 20, 22, 27, 31, 37, 38, 39, 40, 44, 59, 60, 61, 62, 63, 64, 65, 66, 69, 79, 85, 89, 97, 99, 100, 101, 102, 103, 104, 110, 111, 112, 113, 114, 115, 116, 117, 118, 145, 150, 151, 154, 155, 156, 157, 160, 171, 173, 174, 177, 178, 179, 180, 181, 182, 187, 188, 189, 190, 191, 192, 216, 217, 220, 221, 222, 223, 226, 235, 236, 238, 255, 260, 262, 263, 264, 265, 266, 267, 268, 273, 286, 289, 290, 293, 294, 297, 298, 299, 300, 313, 314, 318, 319, 320, 321, 332, 333, 334, 335, 336, 337, 338, 339, 340, 341, 342, 343, 344, 345, 346, 365, 370, 372, 375, 376, 385, 389, 392, 398, 399, 400, 401, 402, 403, 404, 405, 406, 407, 408, 409, 410, 411, 412, 413, 436, 437, 444, 445, 447, 460, 461, 469, 470, 471, 472, 473, 474, 475, 476, 477, 478, 488, 503, 504, 507, 508, 512, 513, 514, 538, 541, 547, 
\end{tabularx}

\begin{tabularx}{0.48\textwidth}{rrrrX}
\footnotesize{$d$} & \footnotesize{$n$} & \footnotesize{$m$} & \footnotesize{$t$} & \footnotesize{$k$} \\ \hline
4&7&28&1&549, 550, 551, 552, 553, 554, 555, 556, 557, 558, 559, 560, 561, 562, 563, 564, 565, 566, 567, 568, 569, 570, 571, 572, 588, 589, 590, 591, 631, 634, 635, 643, 648, 649, 656, 657\\ 
4&8&16&1&715\\ 4&13&13&1&1, 3\\ 4&13&26&1&5, 64\\ 4&13&27&2&422\\ 5&10&30&1&45, 98\\ 5&11&11&1&1, 3\\ 5&11&22&1&5, 23, 28, 33\\ 5&21&21&1&6, 7, 11\\ 6&9&27&1&32, 50, 83, 212\\ 6&11&11&1&1, 3\\ 6&11&22&1&5, 23, 28, 33\\ 6&16&16&1&2, 4, 5, 15, 16, 17, 18, 19, 21, 26, 27, 28, 30, 34, 37, 57, 62, 63, 81, 115, 145, 146, 148, 176, 181, 184, 399, 428, 443, 759\\ 7&15&15&1&1, 3, 6\\ 7&15&16&2&447, 777, 1079\\ 7&15&30&1&5, 11, 17, 18, 20, 28, 50, 52, 53, 55, 64, 105, 108, 112, 113, 116, 118, 121, 202, 203, 204, 205, 208, 212, 218, 220, 323, 325, 326, 327, 331, 345, 350, 357, 490, 492, 493, 494, 513, 523, 526, 528, 677, 680, 682, 691, 693, 698, 701, 702, 883, 884, 886, 887, 891, 895, 900, 903, 1077, 1078, 1081, 1089, 1091, 1095, 1096, 1098, 1273, 1274, 1275, 1279, 1293, 1302, 1307\\ 7&28&28&1&27\\ 8&15&15&1&1, 3, 6\\ 8&15&16&2&447, 777, 1079\\ 8&15&30&1&5, 11, 17, 18, 20, 28, 50, 52, 53, 55, 64, 105, 108, 112, 113, 116, 118, 121, 202, 203, 204, 205, 208, 212, 218, 220, 323, 325, 326, 327, 331, 345, 350, 357, 490, 492, 493, 494, 513, 523, 526, 528, 677, 680, 682, 691, 693, 698, 701, 702, 883, 884, 886, 887, 891, 895, 900, 903, 1077, 1078, 1081, 1089, 1091, 1095, 1096, 1098, 1273, 1274, 1275, 1279, 1293, 1302, 1307\\ 9&13&13&1&1, 3\\ 9&13&26&1&5, 64\\ 9&13&27&2&422\\ 9&19&19&1&1, 3, 5\\ 10&16&16&1&2, 4, 5, 15, 16, 17, 18, 19, 21, 26, 27, 28, 30, 34, 37, 57, 62, 63, 81, 115, 145, 146, 148, 176, 181, 184, 399, 428, 443, 759\\ 10&19&19&1&1, 3, 5\\ 11&23&23&1&3\\ 12&23&23&1&3\\ 13&27&27&1&21, 134, 292\\ 14&27&27&1&21, 134, 292\\ 16&21&21&1&6, 7, 11\\ 21&28&28&1&27
\end{tabularx}
\end{center}
\end{multicols}
\end{table}

\section{A Gelfand pair involving the Heisenberg group}

We now turn our attention to the Heisenberg group, with the goal of constructing ETFs with Heisenberg symmetry. In this section, we identify a Gelfand pair having a Heisenberg group as a regular subgroup. We will leverage that Gelfand pair in the next section to construct a new infinite family of optimal line packings.

For the remainder of the paper, we fix a finite abelian group $A$ of odd order, with the group operation written additively. The exponent of $A$ is $\exp(A)$, the smallest positive integer $n$ such that $n\cdot a = 0_A$ for all $a \in A$. Recall that $\hat{A}$ is the Pontryagin dual group, consisting of all homomorphisms $\alpha \colon A \to \T$, under the operation of pointwise multiplication. We again use additive notation for $\hat{A}$, and also write $\langle a, \alpha \rangle$ for the image of $a\in A$ under $\alpha \in \hat{A}$. All such images necessarily lie in the cyclic subgroup $C_{\exp(A)}\subset \T$ of order $\exp(A)$, whose group operation we express multiplicatively. As a consequence of Lagrange's Theorem, $\exp(A)$ is odd, and squaring is an automorphism in $C_{\exp(A)}$; we express its inverse with the notation $z \mapsto z^{1/2}$. Similarly, $z^{-1/2}$ denotes the multiplicative inverse of $z^{1/2}$.

\begin{defn}
The \emph{Heisenberg group} over $A$ is the set $H = A \times \hat{A} \times C_{\exp(A)}$ under the operation
\[ (a_1, \alpha_1, z_1)\cdot (a_2,\alpha_2,z_2) = \left(a_1 + a_2, \alpha_1 + \alpha_2, z_1z_2 \langle a_2, \alpha_1 \rangle^{1/2} \langle a_1, \alpha_2 \rangle^{-1/2} \right). \]
\end{defn}
For a more compact notation, we let $K = A \times \hat{A}$ and introduce the \emph{symplectic form} ${[\cdot , \cdot ]}\colon K \times K \to C_{\exp(A)}$ given by
\[ [ (a_1,\alpha_1), (a_2, \alpha_2) ] = \langle a_2, \alpha_1 \rangle \langle a_1, \alpha_2 \rangle^{-1} \qquad (a_i \in A, \alpha_i \in \hat{A}). \]
Then $H = K \times C_{\exp(A)}$ as a set, with the group operation given by
\[ (u_1,z_1)\cdot (u_2,z_2) = (u_1 + u_2, z_1z_2[u_1,u_2]^{1/2}) \qquad (u_i \in K, z_i \in C_{\exp(A)}), \]
and inverses given by $(u,z)^{-1} = (-u,z^{-1})$ for $u\in K$ and $z\in C_{\exp(A)}$.

\begin{defn}
The \emph{symplectic group} over $A$ is the subgroup $\Sp(K) \subset \Aut(K)$ of all automorphisms that preserve the symplectic form. In other words,
\[ \Sp(K) = \{ \sigma \in \Aut(K) : [ \sigma( u_1 ), \sigma( u_2 ) ] = [u_1,u_2] \text{ for all }u_1,u_2 \in K \}. \]
\end{defn}
It acts on $H$ with automorphisms
\[ \sigma \cdot (u,z) = \left( \sigma( u ), z \right) \qquad (\sigma \in \Sp(K), u \in K, z \in C_{\exp(A)}), \]
as the reader can verify. 

Our goal in this section is to prove the following.

\begin{theorem} \label{thm:spgp}
$(H \rtimes \Sp(K), \Sp(K))$ is a Gelfand pair.
\end{theorem}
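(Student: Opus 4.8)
The plan is to reduce the statement to a commutativity assertion about a convolution algebra on the Heisenberg group, and then to establish that commutativity by Gelfand's trick.

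First I would set up the regular subgroup. Since $G := H\rtimes\Sp(K)$ is a semidirect product, the Heisenberg group $H$ is a complement to $\Sp(K)$: we have $H\cap\Sp(K)=\{1\}$ and $H\cdot\Sp(K)=G$. Thus, taking $\Sp(K)$ as the point stabilizer of the transitive action of $G$ on $G/\Sp(K)$, the subgroup $H$ is \emph{regular}. Moreover, $\Sp(K)$ normalizes $H$, so the induced action of the stabilizer on $H$ from \eqref{eq:gk} is simply the conjugation action, namely $\sigma\cdot(u,z)=(\sigma(u),z)$. Applying Theorem~\ref{thm:milk} with $H$ as the regular subgroup and $\Sp(K)$ as the stabilizer, the adjacency algebra $\A$ is $*$-isomorphic to the convolution algebra $L^2(H)^{\Sp(K)}$ of $\Sp(K)$-invariant functions on $H$. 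Hence $(H\rtimes\Sp(K),\Sp(K))$ is a Gelfand pair if and only if $L^2(H)^{\Sp(K)}$ is commutative, and this is what I would prove.

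The tool for commutativity is Gelfand's trick (see \cite{CSST}). Suppose $\tau$ is an anti-automorphism of $H$ that maps every $\Sp(K)$-orbit onto itself; equivalently, $\varphi\circ\tau=\varphi$ for every $\varphi\in L^2(H)^{\Sp(K)}$. A direct computation shows that an anti-automorphism reverses convolution, i.e. $(\varphi_1*\varphi_2)\circ\tau=(\varphi_2\circ\tau)*(\varphi_1\circ\tau)$ for all $\varphi_1,\varphi_2\in L^2(H)$. For $\varphi_1,\varphi_2\in L^2(H)^{\Sp(K)}$ the product $\varphi_1*\varphi_2$ again lies in $L^2(H)^{\Sp(K)}$, so applying $\tau$-invariance twice gives $\varphi_1*\varphi_2=(\varphi_1*\varphi_2)\circ\tau=\varphi_2*\varphi_1$. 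Thus it suffices to produce one such $\tau$. For the construction, consider the map $\beta_0\colon K\to K$ defined by $\beta_0(a,\alpha)=(-a,\alpha)$. Using $\langle -a,\alpha\rangle=\langle a,\alpha\rangle^{-1}$, one checks that $\beta_0$ negates the symplectic form, $[\beta_0 u,\beta_0 w]=[u,w]^{-1}$ for all $u,w\in K$. Because $[\cdot,\cdot]^{1/2}$ is exactly the twisting cocycle in the Heisenberg multiplication, this sign reversal is precisely what makes $\tau(u,z):=(\beta_0 u,z)$ an \emph{anti}-automorphism of $H$ that fixes the central coordinate $z$; I would verify $\tau\bigl((u_1,z_1)(u_2,z_2)\bigr)=\tau(u_2,z_2)\,\tau(u_1,z_1)$ by a one-line cocycle computation. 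Since $\tau$ preserves $z$, the condition that $\tau$ fix every $\Sp(K)$-orbit collapses to a statement purely about $K$: for every $(a,\alpha)\in K$, the element $(-a,\alpha)$ must lie in the $\Sp(K)$-orbit of $(a,\alpha)$.

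This orbit statement is the crux, and I expect it to be the main obstacle. Note that $-\mathrm{id}\in\Sp(K)$ (it preserves the form), so $(a,\alpha)$ and $(-a,-\alpha)$ are $\Sp(K)$-equivalent and the required claim is equivalent to $(a,\alpha)\sim(a,-\alpha)$; in either form one must establish a transitivity property of $\Sp(K)$. When $A$ is elementary abelian, so that $K$ is a symplectic vector space over a field, this is immediate from the transitivity of $\Sp(K)$ on nonzero vectors. In general I would reduce to the case of a finite abelian $p$-group by splitting $A$ into its primary components---$\Sp(K)$ and $\beta_0$ both factor as products over these components---and then invoke the classification of $\Sp(K)$-orbits on $K$, which is governed by invariants (the order and divisibility type of an element) that are visibly unchanged by negating a single coordinate. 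As an alternative route that avoids the orbit bookkeeping, one may instead decompose $L^2(H)$ into blocks indexed by the central characters of $C_{\exp(A)}$; on each block the $\Sp(K)$-action is conjugation by the associated Weil representation on the Schr\"odinger module, and the invariants form a commutative algebra precisely because that representation is multiplicity free, reducing the theorem to the classical multiplicity-freeness of the Weil representation.
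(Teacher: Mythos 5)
Your proposal is correct and follows essentially the same route as the paper: reduce via Theorem~\ref{thm:milk} to commutativity of $L^2(H)^{\Sp(K)}$, then exploit the involution $(a,\alpha)\mapsto(-a,\alpha)$, which inverts the symplectic form, together with the coincidence of $\Sp(K)$-orbits and $\Aut(K)$-orbits in $K$ (the paper's Corollary~\ref{cor:sporb}, derived from the Dutta--Prasad classifications you cite). Your packaging of the argument as Gelfand's trick with an explicit anti-automorphism $\tau(u,z)=(\beta_0 u,z)$ is just the structural form of the paper's computation, which verifies $f_1*f_2=f_2*f_1$ directly on characteristic functions of $\Sp(K)$-orbits.
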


Our description of the symplectic group is essentially due to Weil \cite{Weil64}. In the special case where $A = \Z_m$, $\Sp(K)$ can be identified with $\SL(2,\Z_m)$, in which case the action on $H$ is closely associated with the Clifford group \cite{App05}. For a continuous version of Theorem~\ref{thm:spgp}, we refer the reader to \cite{BJR90}. Benson and Ratcliff proved a stronger version of Theorem~\ref{thm:spgp} in the special case where $A$ is the additive group of a finite field \cite{BR08}. They also have descriptions of the corresponding spherical functions in that case \cite{BR09}.

Our proof of Theorem~\ref{thm:spgp} requires a detailed understanding of the orbits of $\Sp(K)$ on $K$, which we now review. The following is due to Dutta and Prasad \cite{DP11,DP15}. By the Fundamental Theorem of Finitely Generated Abelian Groups, we can write $A = \prod_p A_p$, where the direct product is over all primes $p$ dividing $|A|$, and each $A_p$ has the form
\[ A_p \cong \Z_{p^{\lambda_{p,1}}}\times \dotsb \times \Z_{p^{\lambda_{p,l_p}}} \]
for a sequence $\lambda_p = (\lambda_{p,1}, \dotsc, \lambda_{p,l_p})$ of positive integers. Then $\hat{A} = \prod_p \hat{A}_p$ and $K = \prod_p K_p$ with 
\begin{equation} \label{eq:Kpd}
K_p = A_p \times \hat{A}_p \cong (\Z_{p^{\lambda_{p,1}}}\times \dotsb \times \Z_{p^{\lambda_{p,l_p}}}) \times ( \Z_{p^{\lambda_{p,1}}}\times \dotsb \times \Z_{p^{\lambda_{p,l_p}}} )
\end{equation}
for each $p$.

To each sequence $\lambda_p$, we associate the set
\[ P_{\lambda_p} = \{ (v,k) \in \Z^2 : k \in \{\lambda_{p,1},\dotsc,\lambda_{p,l_p}\} \text{ and }0 \leq v \leq k\}, \]
which is partially ordered by the relation
\[ (v_1,k_1) \geq (v_2,k_2) \iff v_2 \geq v_1 \text{ and } k_2 - v_2 \leq k_1 - v_1. \]
Recall that an \emph{order ideal} in $P_{\lambda_p}$ is a (possibly empty) set $I\subset P_{\lambda_p}$ with the property that ${(v_1,k_1)} \in I$ and ${(v_2,k_2)} \leq {(v_1,k_1)}$ imply ${(v_2,k_2)} \in I$.

\begin{theorem} \cite[Theorem~5.4]{DP11}. \label{thm:Aporb}
The order ideals of $P_{\lambda_p}$ are in one-to-one correspondence with the $\Aut(A_p)$-orbits of $A_p$.
\end{theorem}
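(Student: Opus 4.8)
The plan is to trade the group action for a computable complete invariant of orbits, and then match invariants with order ideals by a down-closure construction. Fix a decomposition $A_p=\bigoplus_{i=1}^{l_p}\Z_{p^{\lambda_{p,i}}}$ into cyclic summands, and for $a\in A_p$ write $a=(a_1,\dotsc,a_{l_p})$ with $a_i=p^{v_i}u_i$, where $u_i$ is a unit and $0\le v_i\le\lambda_{p,i}$ (so $v_i=\lambda_{p,i}$ exactly when $a_i=0$). Writing $h(x)$ for the largest $h$ with $x\in p^hA_p$, the \emph{height sequence} $h(a),h(pa),h(p^2a),\dotsc$ records the heights of the successive multiples of $a$. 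The decisive first step is the classical fact that two elements of a finite abelian $p$-group lie in the same $\Aut(A_p)$-orbit if and only if they have the same height sequence. One implication is immediate: every automorphism preserves each characteristic subgroup $p^hA_p$, hence preserves all heights.

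Next I would convert height sequences into order ideals. A direct computation from the direct-sum structure gives $h(p^ja)=j+\min\{v_i:\lambda_{p,i}-v_i>j\}$, so the height sequence is governed entirely by the lower-left envelope of the points $(v_i,\lambda_{p,i})$ occupied by the nonzero components of $a$. This suggests assigning to $a$ the set $I_a\subset P_{\lambda_p}$ equal to the down-closure, in the order of $P_{\lambda_p}$, of $\{(v_i,\lambda_{p,i}):a_i\neq 0\}$. Being a down-closure, $I_a$ is automatically an order ideal, and the height computation shows that $I_a$ is determined by the height sequence; together with the easy implication above, this makes $I_a$ depend only on the orbit of $a$.

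It then remains to show that $a\mapsto I_a$ is a bijection onto the order ideals of $P_{\lambda_p}$. For injectivity I would recover the height sequence from $I_a$: for each part-size $k$ the points $(v,k)$ form a chain, and more generally the points of a fixed ``co-order'' $r=\lambda_{p,i}-v_i$ form a chain, so intersecting $I_a$ with that chain returns the threshold $\min\{v_i:\lambda_{p,i}-v_i\ge r\}$, and these thresholds reassemble the sequence. For surjectivity, every order ideal is the down-closure of its antichain of maximal elements; since each fixed-$k$ chain is met at most once, such an antichain selects at most one point $(v_k,k)$ per part-size $k$, and I would realize it by setting the matching summand component to $p^{v_k}$ and all others to $0$.

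The crux, and the step I expect to be hardest, is the nontrivial implication of the first step, that equal height sequences force a common orbit. I would construct an automorphism carrying $a$ to $b$ by induction on $|A_p|$. Equality of height sequences makes the evident map $\langle a\rangle\to\langle b\rangle$, $a\mapsto b$, an isomorphism of cyclic subgroups preserving heights in $A_p$, and the work is to extend it to an automorphism of $A_p$; this is transparent when $a$ has height $0$ and maximal order, since then $\langle a\rangle$ splits off as a direct summand and one matches complements via the structure theorem, and the general case is reduced to this by stripping off the common height and invoking the inductive hypothesis. A final, purely bookkeeping point is to reconcile the minimal points of $P_{\lambda_p}$ with $v=k$, which encode vanishing components and lie in every $I_a$; tracking these is what makes the count of order ideals match the number of orbits exactly.
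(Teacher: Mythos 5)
The paper offers no proof of this statement: it is imported from Dutta--Prasad \cite{DP11}, so your proposal can only be measured against that source and on its own merits. Dutta and Prasad do not argue through height sequences; they normalize an arbitrary element to a canonical representative indexed by the maximal antichain of its ideal, using unipotent automorphisms of $A_p$. Your route---Ulm/height sequences as a complete orbit invariant, then a dictionary between realizable height sequences and order ideals---is genuinely different, and its combinatorial half checks out: the formula $h(p^ja)=j+\min\{v_i:\lambda_{p,i}-v_i>j\}$ is correct; the fixed co-order sets are indeed chains; for co-order $r\ge 1$ the threshold $\min\{v_i:\lambda_{p,i}-v_i\ge r\}$ is recovered from $I_a$ as the least $v$ for which some $(v,k)\in I_a$ has $k-v\ge r$, so the ideal and the height sequence determine one another; and the antichain construction gives surjectivity.

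However, the step you yourself flag as hardest is the one your sketch does not close. Appealing to transitivity of finite abelian $p$-groups (a classical fact going back to Kaplansky) would be legitimate; but as a proof, your induction stalls. Stripping the common height $h$ does work cleanly: any $a'$ with $p^ha'=a$ automatically has height $0$, and the initial segment $h(p^ja')=j$ for $0\le j\le h$ is forced, so the stripped elements again have equal sequences. But this only lands you at height-$0$ elements, and your base case covers only those of maximal order, where $\langle a\rangle$ is pure and splits off as a summand. For a height-$0$ element of non-maximal order---say $(u,0)\in\Z_p\times\Z_{p^3}$---neither the height nor $|A_p|$ decreases, so the induction has no measure of progress. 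The standard repair is exactly the Dutta--Prasad elementary operation: when $(v_{i'},\lambda_{p,i'})\le(v_i,\lambda_{p,i})$ in $P_{\lambda_p}$, the assignment $e_i\mapsto e_i-p^{v_{i'}-v_i}u_i^{-1}u_{i'}e_{i'}$ extends to an automorphism (the co-order inequality $\lambda_{p,i'}-v_{i'}\le\lambda_{p,i}-v_i$ is precisely what makes the component map $\Z_{p^{\lambda_{p,i}}}\to\Z_{p^{\lambda_{p,i'}}}$ well defined) and it annihilates the $i'$-th component of $a$; iterating reduces every element to the canonical form indexed by the maximal antichain of $I_a$, which proves transitivity directly and makes the height-sequence detour unnecessary.

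Second, your closing ``bookkeeping'' paragraph is not bookkeeping, and as written it is self-contradictory. With $I_a$ generated only by the nonzero components you have $I_0=\emptyset$, and for $a=(0,p^2)\in\Z_p\times\Z_{p^3}$ the corner point $(1,1)$ fails $1\ge v_i=2$, so it is not in $I_a$: the $v=k$ points do \emph{not} lie in every $I_a$ under your convention. If instead you let zero components contribute $(\lambda_{p,i},\lambda_{p,i})$, then every $I_a$ contains the entire corner chain and your map reaches only ideals containing it. Either way you cannot be bijective onto all order ideals of $P_{\lambda_p}$ as displayed in this paper: for $A_p=\Z_p$ the displayed poset is the two-element chain $(1,1)<(0,1)$ with three order ideals but only two orbits. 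The fundamental poset in \cite{DP11} is defined with $0\le v\le k-1$; with that convention your construction (nonzero components only, $0\mapsto\emptyset$) is a correct bijection, and order ideals of the larger poset printed here that contain all corner points $(k,k)$ correspond exactly to ideals of the $v<k$ subposet. The discrepancy is harmless for this paper's application, which uses only that the orbit counts in the two quoted theorems agree under a common convention, but your proof must commit to one convention and state the target class of ideals accordingly.
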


\begin{theorem} \cite[Theorem~4.5]{DP15}. \label{thm:Sporb}
The order ideals of $P_{\lambda_p}$ are in one-to-one correspondence with the $\Sp(K_p)$-orbits of $K_p$.
\end{theorem}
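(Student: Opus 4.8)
The plan is to reduce the statement to its companion, Theorem~\ref{thm:Aporb}, by producing an explicit bijection between the $\Sp(K_p)$-orbits of $K_p$ and the $\Aut(A_p)$-orbits of $A_p$; composing with the correspondence of Theorem~\ref{thm:Aporb} then delivers the order-ideal parametrization for free. The candidate bijection sends the $\Aut(A_p)$-orbit of $a \in A_p$ to the $\Sp(K_p)$-orbit of the pair $(a,0) \in A_p \times \hat{A}_p = K_p$. To see that this is well defined, I would first embed $\Aut(A_p)$ into $\Sp(K_p)$ via $g \mapsto \sigma_g$, where $\sigma_g(a,\alpha) = (g(a),\, \alpha\circ g^{-1})$; a one-line computation using $\langle g(a), \alpha\circ g^{-1}\rangle = \langle a,\alpha\rangle$ confirms that $\sigma_g$ preserves the symplectic form. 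Since $\sigma_g(a,0) = (g(a),0)$, any two elements of $A_p$ in a common $\Aut(A_p)$-orbit land in a common $\Sp(K_p)$-orbit, so the induced map on orbits is well defined.

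Surjectivity is the assertion that every $\Sp(K_p)$-orbit meets the Lagrangian $L := A_p \times \{0\}$. Here I would use that $L$ is maximal isotropic (it is isotropic because the form is alternating, and $|L| = |A_p| = \sqrt{|K_p|}$), and that each $x \in K_p$ generates a cyclic isotropic subgroup $\langle x\rangle$ since $[x,x] = 1$. A Witt-type extension theorem for the symplectic module $K_p$ then lets one carry any such cyclic isotropic subgroup into $L$ by a symplectic automorphism; as $L$ is a subgroup, the generator $x$ itself is thereby moved into $L$, so the orbit of $x$ contains a point of $L$.

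For injectivity, I would compute the set-stabilizer of $L$ inside $\Sp(K_p)$: writing symplectic maps in block form with respect to $K_p = A_p \oplus \hat{A}_p$, those preserving $L$ are exactly the parabolic maps $(a,\alpha)\mapsto (g(a)+h(\alpha),\, \alpha\circ g^{-1})$ for some $g \in \Aut(A_p)$ and an auxiliary $h\colon \hat{A}_p \to A_p$, and they act on $L$ precisely through $g \in \Aut(A_p)$. Thus two points of $L$ that are $\Sp(K_p)$-equivalent are $\Aut(A_p)$-equivalent, \emph{provided} the equivalence can be realized inside the stabilizer of $L$ — again a Witt-type statement, now guaranteeing that an isometry sending $\langle(a,0)\rangle$ onto $\langle(a',0)\rangle$ extends to a symplectic automorphism preserving $L$. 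Combining the three steps yields the desired bijection, and hence the theorem.

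The main obstacle is the Witt-type extension theorem for $K_p$: unlike the classical field setting, $A_p$ is a nonfree module with several distinct cyclic heights $\lambda_{p,1},\dots,\lambda_{p,l_p}$, so an isometry of submodules need not extend, and one must track the order of generators together with the divisibility and height data carefully. I expect the genuine content of Dutta and Prasad's argument to lie precisely in establishing this extension property (equivalently, in attaching directly to each $x \in K_p$ the order ideal recording the heights and orders visible in $x$), after which the orbit count matches that of Theorem~\ref{thm:Aporb} through the bijection above. A parallel check that the two orbit numbers coincide provides a useful sanity test.
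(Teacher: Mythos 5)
First, note what you are comparing against: the paper does not prove this statement at all --- it is quoted verbatim from Dutta and Prasad \cite[Theorem~4.5]{DP15} --- so your argument has to stand on its own, and it does not. The sound parts are the preliminaries: the embedding $g \mapsto \sigma_g$, $\sigma_g(a,\alpha) = (g(a), \alpha\circ g^{-1})$, of $\Aut(A_p)$ into $\Sp(K_p)$ is correct, and the numerology of your plan is at least consistent because $P_{\lambda_p}$ depends only on the \emph{set} $\{\lambda_{p,1},\dotsc,\lambda_{p,l_p}\}$ and not on multiplicities, so $A_p$ and $K_p \cong A_p \times \hat{A}_p$ yield the same poset. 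The genuine gap is that both surjectivity and injectivity of your orbit map are made to rest on a ``Witt-type extension theorem'' that you never prove and that is, as you yourself concede in the final paragraph, the entire mathematical content of the theorem. Worse, the off-the-shelf form of Witt extension you invoke is actually \emph{false} here: take $A_p = \Z_p \times \Z_{p^2}$ and the elements $x = (1,0;0,0)$ and $y = (0,p;0,0)$ of $K_p$. Both have order $p$, and since the form is alternating it restricts trivially to $\langle x \rangle$ and $\langle y \rangle$, so any group isomorphism $\langle x \rangle \to \langle y \rangle$ is an isometry; yet no automorphism of $K_p$, symplectic or otherwise, carries $\langle x \rangle$ onto $\langle y \rangle$, because $y \in pK_p$ while no generator of $\langle x \rangle$ lies in $pK_p$. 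So the extension principle you need must already be calibrated to exactly the height/order-ideal invariants that the theorem is supposed to produce --- deferring it as ``the genuine content of Dutta and Prasad's argument'' leaves the proof circular at its core.

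Two finer points. Injectivity is actually salvageable without any Witt-type input: since $\Sp(K_p) \subset \Aut(K_p)$, two $\Sp$-equivalent points $(a,0), (a',0)$ of $L$ are $\Aut(K_p)$-equivalent, and the explicit orbit invariant from the proof of Theorem~\ref{thm:Aporb} (applied to $K_p$) assigns to $(a,0)$ the same order ideal that it assigns to $a$ in $A_p$, whence $a$ and $a'$ are $\Aut(A_p)$-equivalent --- though this uses the explicit form of the Dutta--Prasad correspondence, not merely the bijection as stated. Surjectivity (every $\Sp(K_p)$-orbit meets $L = A_p \times \{0\}$) is the irreducible difficulty: what is needed is that for each $x \in K_p$ there exists \emph{some} embedding of $\langle x \rangle$ into $L$ matching all height data of $x$ that extends to a symplectic automorphism, and establishing this is essentially the work of \cite{DP15}, whose argument proceeds not by Lagrangian reduction but by attaching the order ideal directly to each element of $K_p$ and proving transitivity of $\Sp(K_p)$ on elements with the same ideal. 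Your plan is a reasonable reorganization of the statement, but as written it proves nothing beyond the easy inclusion of orbits.
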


Taken together, these yield the following corollary, implicitly contained in the proof of \cite[Theorem~4.5]{DP15}.

\begin{cor} \label{cor:sporb}
$\Sp(K)$-orbits in $K$ are identical to $\Aut(K)$-orbits.
\end{cor}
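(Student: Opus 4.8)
The plan is to reduce to the primary components of $K$ and then apply Theorems~\ref{thm:Aporb} and~\ref{thm:Sporb} as a matching pair of orbit counts. First I would record that both groups in question respect the decomposition $K=\prod_p K_p$. Since each $K_p$ is the characteristic $p$-primary component of $K$, every automorphism preserves it, giving $\Aut(K)=\prod_p\Aut(K_p)$. For the symplectic group I would first check that the form is orthogonal across distinct primes: if $u\in K_p$ and $w\in K_q$ with $p\neq q$, write $u=(a_u,\alpha_u)$ and $w=(a_w,\alpha_w)$; then $\langle a_w,\alpha_u\rangle$ and $\langle a_u,\alpha_w\rangle$ are trivial because a character in $\hat{A}_p$ kills $A_q$, so $[u,w]=1$. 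Bi-additivity of $[\cdot,\cdot]$ together with this orthogonality shows that a product $\prod_p\sigma_p$ with $\sigma_p\in\Aut(K_p)$ preserves the form precisely when each $\sigma_p\in\Sp(K_p)$, whence $\Sp(K)=\prod_p\Sp(K_p)$. Consequently both the $\Aut(K)$-orbits and the $\Sp(K)$-orbits on $K=\prod_p K_p$ are exactly the products of the corresponding orbits in the factors, and it suffices to prove the corollary one prime at a time, i.e.\ to show that $\Sp(K_p)$-orbits coincide with $\Aut(K_p)$-orbits on $K_p$.

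Fix a prime $p$. Because $\Sp(K_p)\subset\Aut(K_p)$, the partition of $K_p$ into $\Sp(K_p)$-orbits refines the partition into $\Aut(K_p)$-orbits, so it is enough to prove that the two partitions have the same finite number of blocks. Theorem~\ref{thm:Sporb} supplies one count: the $\Sp(K_p)$-orbits are in bijection with the order ideals of $P_{\lambda_p}$. For the other count I would apply Theorem~\ref{thm:Aporb} not to $A_p$ but to the finite abelian $p$-group $K_p$ itself. The key observation is that $K_p=A_p\times\hat{A}_p\cong(\Z_{p^{\lambda_{p,1}}}\times\dotsb\times\Z_{p^{\lambda_{p,l_p}}})^2$, so its type is $\lambda_p$ with every multiplicity doubled, yet the \emph{set} of distinct part sizes $\{\lambda_{p,1},\dotsc,\lambda_{p,l_p}\}$ is unchanged. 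Since the poset $P_{\lambda_p}$ depends only on this set of distinct exponents---multiplicities enter neither its underlying set nor its order relation---the poset attached to $K_p$ is literally $P_{\lambda_p}$ again. Thus Theorem~\ref{thm:Aporb} gives that the $\Aut(K_p)$-orbits are also in bijection with the order ideals of $P_{\lambda_p}$.

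Comparing the two counts finishes the argument: the $\Sp(K_p)$-orbits and the $\Aut(K_p)$-orbits are equinumerous, both counted by order ideals of $P_{\lambda_p}$, and since the former refine the latter, no $\Aut(K_p)$-orbit can contain more than one $\Sp(K_p)$-orbit. Hence the two partitions agree for every $p$, and taking products over $p$ yields the corollary. I expect the genuinely clever point---the one that makes Theorem~\ref{thm:Aporb}, a statement about $A_p$, applicable to the larger group $K_p$---to be the remark that doubling the multiplicities of the parts of $\lambda_p$ leaves the poset $P_{\lambda_p}$, and therefore the number of automorphism orbits, unchanged. The routine-but-must-verify obstacle is the prime-by-prime reduction, which rests on the orthogonality of the symplectic form across distinct primary components and the resulting factorization $\Sp(K)=\prod_p\Sp(K_p)$.
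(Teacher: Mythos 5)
Your proof is correct and follows essentially the same route as the paper: reduce to primary components via $\Aut(K)\cong\prod_p\Aut(K_p)$ and $\Sp(K)\cong\prod_p\Sp(K_p)$, then compare orbit counts prime by prime using Theorem~\ref{thm:Aporb} applied to $K_p$ in place of $A_p$ together with Theorem~\ref{thm:Sporb}, and conclude by the refinement argument since $\Sp(K_p)\subset\Aut(K_p)$. The only differences are cosmetic: you carry out the product factorization before the prime-by-prime count rather than after, and you make explicit the observation (left implicit in the paper's appeal to \eqref{eq:Kpd}) that doubling the multiplicities of $\lambda_p$ does not change the poset $P_{\lambda_p}$, which is indeed the point that lets Theorem~\ref{thm:Aporb} apply to $K_p$.
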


\begin{proof}
First, we prove the corollary for one of the subgroups $K_p \subset K$. Applying Theorem~\ref{thm:Aporb} with $K_p$ in place of $A_p$, and keeping in mind the decomposition \eqref{eq:Kpd}, we see that
\begin{equation} \label{eq:autsp}
| \{ \text{$\Aut(K_p)$-orbits in $K_p$} \} | = |\{ \text{order ideals in $P_{\lambda_p}$} \}| =  |\{ \text{$\Sp(K_p)$-orbits in $K_p$} \}|.
\end{equation}
Since $\Sp(K_p) \subset \Aut(K_p)$, every $\Sp(K_p)$-orbit is contained in a unique $\Aut(K_p)$-orbit, and every $\Aut(K_p)$-orbit is a disjoint union of $\Sp(K_p)$-orbits. Thus, containment produces a surjection $\{ \text{$\Sp(K_p)$-orbits} \} \to \{ \text{$\Aut(K_p)$-orbits} \}$, which must be an injection by \eqref{eq:autsp}. It follows that every $\Aut(K_p)$-orbit coincides with a single $\Sp(K_p)$-orbit.

Next, any choice of automorphisms $\sigma_p \in \Aut(K_p)$, $p$ prime and dividing $|A|$, gives an automorphism $\sigma \in \Aut(K)$, defined by
\[ \sigma\left( \sum_p u_p \right) = \sum_p \sigma_p(u_p) \qquad (u_p \in K_p). \]
We express this by saying that $\sigma = (\sigma_p)_p$. On the other hand, since the $K_p$'s have relatively prime orders, it is easy to show that any $\sigma \in \Aut(K)$ maps each $K_p$ back into itself, and therefore takes the form $\sigma = (\sigma_p)_p$ with $\sigma_p = \left. \sigma\right|_{K_p}$. Hence, we can identify 
\[ \Aut(K) \cong \prod_p \Aut(K_p). \]

We claim that $\Sp(K) \cong \prod_p \Sp(K_p)$ under the same identification. If $[\cdot, \cdot ]_p$ denotes the symplectic form on $K_p$, then for any choice of $a_p,b_p \in A_p$ and $\alpha_p,\beta_p \in \hat{A}_p$, we have
\begin{align*}
\left[ \sum_p (a_p,\alpha_p),\ \sum_p (b_p,\beta_p) \right] &= \left\langle \sum_p b_p, \sum_p \alpha_p \right\rangle \left\langle \sum_p a_p, \sum_p \beta_p \right\rangle^{-1} \\[5 pt]
&= \prod_p \langle b_p, \alpha_p \rangle \cdot \prod_p \langle a_p, \beta_p \rangle^{-1}  \\[5 pt]
&= \prod_p \left[ (a_p, \alpha_p), (b_p, \beta_p) \right]_p.
\end{align*}
Equivalently,
\[ \left[ \sum_p u_p, \sum_p v_p \right] = \prod_p [ u_p, v_p ]_p \qquad (u_p,v_p \in K_p). \]
From this it is clear that $\sigma \in \Sp(K)$ if and only if $\left. \sigma \right|_p \in \Sp(K_p)$ for every $p$. 

Now, if $u = \sum_p u_p \in K$, $u_p \in K_p$, and $\sigma = (\sigma_p)_p \in \Aut(K)$, then for each $p$ we can find $\sigma_p' \in \Sp(K_p)$ such that $\sigma_p'(u_p) = \sigma_p(u_p)$. By the above, $\sigma':=(\sigma_p')_p$ lies in $\Sp(K)$ and satisfies
\[ \sigma'(u) = \sum_p \sigma_p'(u_p) = \sum_p \sigma_p(u_p) = \sigma(u). \]
Thus, the orbit of $u \in K$ under $\Aut(K)$ is the same as its orbit under $\Sp(K)$.
\end{proof}

Now we can prove our main result.

\begin{proof}[Proof of Theorem~\ref{thm:spgp}]
By Theorem~\ref{thm:milk}, it suffices to prove that $L^2(H)^{\Sp(K)}$ is commutative. Given $(w,z) \in H$, we write $\delta_{(w,\, z)} \in L^2(H)$ for the corresponding point mass. If $f_1,f_2 \in L^2(H)^{\Sp(K)}$ are characteristic functions of $\Sp(K)$-orbits in $H$, then there are $\Sp(K)$-orbits $\mathcal{O}_1,\mathcal{O}_2 \subset K$ and $z_1,z_2 \in C_{\exp(A)}$ such that
\[ f_i = \sum_{u\in \mathcal{O}_i} \delta_{(u,\, z_i)} \qquad (i=1,2). \]
Consequently,
\begin{align*}
f_1 * f_2 &= \sum_{u \in \mathcal{O}_1} \sum_{v\in \mathcal{O}_2} \delta_{(u,\, z_1)\cdot (v,\, z_2)} \\[5 pt]
&= \sum_{u\in \mathcal{O}_1} \sum_{v\in \mathcal{O}_2} \delta_{\left(u+v,\, z_1z_2[u,v]^{1/2}\right)}.
\end{align*}
Given any $(w,z) \in H$, this means that
\begin{equation} \label{eq:f1f2}
(f_1 * f_2)(w,z) = \left|\{ (u,v) \in \mathcal{O}_1 \times \mathcal{O}_2 : u + v = w \text{ and } z_1z_2[u,v]^{1/2} = z \} \right|.
\end{equation}

Now let $\sigma \in \Aut(K)$ be given by $\sigma(a,\alpha) = (-a,\alpha)$ for $a\in A$ and $\alpha \in \hat{A}$. It satisfies the convenient identity
\[ [ \sigma(u), \sigma(v) ] = [v,u] \qquad (u,v \in K). \]
By Corollary~\ref{cor:sporb}, there is a $\sigma' \in \Sp(K)$ such that
\[ \sigma'\cdot (w,z) = \left( \sigma'(w),z \right) = \left( \sigma(w), z \right), \]
and by Theorem~\ref{thm:milk}, $f_1 * f_2$ is constant on $\Sp(K)$-orbits. Hence, 
\[ (f_1 * f_2)(w,z) = (f_1*f_2)\left(\sigma(w),z\right). \]
Another application of Corollary~\ref{cor:sporb} shows that $\sigma$ restricts to a bijection of each $\mathcal{O}_i$, $i=1,2$, onto itself. Thus, we can replace each of $u,v,w$ with $\sigma(u),\sigma(v),\sigma(w)$, respectively, in the appropriate place in \eqref{eq:f1f2} to obtain
\begin{align*}
(f_1 * f_2)(w,z) &= \left| \{ (u,v) \in \mathcal{O}_1 \times \mathcal{O}_2 : \sigma(u) + \sigma(v) = \sigma(w) \text{ and } z_1 z_2 [ \sigma(u), \sigma(v) ]^{1/2} = z \} \right| \\
&= \left| \{ (u,v) \in \mathcal{O}_1 \times \mathcal{O}_2 : u + v = w \text{ and } z_1 z_2 [ v, u ]^{1/2} = z \} \right|.
\end{align*}
Comparing with \eqref{eq:f1f2}, we see that $f_1 * f_2 = f_2 * f_1$. These were arbitrary elements of a basis for $L^2(H)^{\Sp(K)}$, so the latter is commutative, and $({H \rtimes \Sp(K)}, {\Sp(K)})$ is a Gelfand pair.
\end{proof}

\section{ETFs from Heisenberg groups} \label{sec:Heis}

As a consequence of Theorem~\ref{thm:spgp}, finitely many out of the uncountable class of Parseval $H$-frames are distinguished by also being $(H \rtimes \Sp(K), \Sp(K))$-frames. In this section, we show that the projective reduction of one of those frames is an ETF.

For the remainder of this section, we fix a character $\gamma \in \hat{C}_{\exp(A)}$, chosen in such a way that $\gamma \colon C_{\exp(A)} \to C_{\exp(A)}$ is an isomorphism. Given $\alpha \in \hat{A}$, we write $\gamma \alpha \in \hat{A}$ for the composition $\gamma \circ \alpha \colon A \to C_{\exp(A)}$. Then the mapping $\alpha \mapsto \gamma \alpha$ is an isomorphism $\hat{A} \cong \hat{A}$, with inverse $\alpha \mapsto \gamma^{-1} \alpha$. Finally, given $a\in A$, we write $\frac{1}{2} a \in A$ for the unique element with $\frac{1}{2}a + \frac{1}{2} a = a$; this is possible since $A$ has odd order. The mapping $a \mapsto \frac{1}{2} a$ is an isomorphism $A \cong A$, with inverse $a \mapsto 2a$.

\begin{defn}
The Schr\"odinger representation associated with $\gamma$ is the map $\pi_\gamma \colon H \to U(L^2(A))$ that for $(a,\alpha,z) \in H$ and $f\in L^2(A)$ is given by
\[ [\pi_\gamma(a,\alpha,z) f](b) = \gamma(z \langle b- \tfrac{1}{2} a , \alpha \rangle ) f(b-a) \qquad (b \in A). \]
\end{defn}

We leave it up to the reader to verify that $\pi_\gamma$ is a unitary representation of $H$. The choice of $\gamma$ determines $\pi_\gamma$ up to unitary equivalence, as shown by the following proposition. Here, we use the simplified notation $H = K \times C_{\exp(A)}$.

\begin{prop}
The trace character $\chi_\gamma$ of $\pi_\gamma$ is given by
\begin{equation} \label{eq:sc}
\chi_\gamma(u,z) = \begin{cases}
\gamma(z)|A|, & \text{ if }u=0_K; \\
0, & \text{ if }u\neq 0_K
\end{cases} \qquad (u\in K, z\in C_{\exp(A)}).
\end{equation}
Consequently, $\pi_\gamma$ is an irreducible representation.
\end{prop}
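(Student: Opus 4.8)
The plan is to compute the trace directly from the definition of $\pi_\gamma$ acting on $L^2(A)$, using the canonical orthonormal basis of point masses $\{\delta_b\}_{b\in A}$, and then read off irreducibility from the character's norm. First I would write
\[ \chi_\gamma(a,\alpha,z) = \tr \pi_\gamma(a,\alpha,z) = \sum_{b\in A} \langle \pi_\gamma(a,\alpha,z)\delta_b, \delta_b \rangle. \]
Applying the defining formula $[\pi_\gamma(a,\alpha,z)f](b) = \gamma(z\langle b - \tfrac{1}{2}a, \alpha\rangle)f(b-a)$ to $f = \delta_b$, the only contribution to the diagonal entry indexed by $b$ comes from the condition $b - a = b$, i.e.\ $a = 0_A$. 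This immediately splits the computation into two cases according to whether $u = (a,\alpha) = 0_K$ or not.

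In the case $u \neq 0_K$, I would argue as follows. If $a \neq 0_A$, then $\pi_\gamma(a,\alpha,z)\delta_b$ is a scalar multiple of $\delta_{b+a}$, which is orthogonal to $\delta_b$; hence every diagonal entry vanishes and $\chi_\gamma = 0$. If $a = 0_A$ but $\alpha \neq 0_{\hat A}$, then the diagonal entries are $\gamma(z\langle b, \alpha\rangle)$, and summing over $b\in A$ gives $\gamma(z)\sum_{b\in A}\gamma(\langle b,\alpha\rangle)$. Since $\alpha \neq 0_{\hat A}$ and $\gamma$ is an isomorphism, the composite $b \mapsto \gamma(\langle b,\alpha\rangle)$ is a nontrivial character of $A$, so the sum over $b$ vanishes by the orthogonality of characters. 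In the remaining case $u = 0_K$ (so $a = 0_A$ and $\alpha = 0_{\hat A}$), each diagonal entry is simply $\gamma(z)$, and summing over the $|A|$ points of $A$ yields $\gamma(z)|A|$, establishing \eqref{eq:sc}.

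For irreducibility, I would verify that $\chi_\gamma$ has unit norm in $L^2(H)$ with respect to the normalized inner product $\tfrac{1}{|H|}\sum_{g\in H}|\chi_\gamma(g)|^2$. From \eqref{eq:sc}, the character is supported on the central copy $\{0_K\}\times C_{\exp(A)}$, where it takes values $\gamma(z)|A|$ of modulus $|A|$. Thus $\sum_{g\in H}|\chi_\gamma(g)|^2 = |C_{\exp(A)}|\cdot |A|^2$, while $|H| = |K|\cdot|C_{\exp(A)}| = |A|^2\cdot|C_{\exp(A)}|$, so the normalized squared norm equals $1$. By the standard irreducibility criterion, $\pi_\gamma$ is irreducible. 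I expect the only mild subtlety to be the orthogonality argument in the subcase $a = 0_A,\ \alpha \neq 0_{\hat A}$, where one must confirm that $\gamma \circ \alpha$ is genuinely a nontrivial character of $A$; this is exactly where the hypothesis that $\gamma$ is an isomorphism (rather than an arbitrary character) does its work, and it is the one place to state carefully rather than gloss over.
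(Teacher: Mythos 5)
Your proposal is correct and follows essentially the same route as the paper's own proof: computing the trace in the point-mass basis $\{\delta_b\}_{b\in A}$, splitting into the cases $a\neq 0_A$, $a=0_A$ with $\alpha\neq 0_{\hat A}$ (where orthogonality of the nontrivial character $\gamma\alpha$ kills the sum, exactly the point you rightly flag as using that $\gamma$ is an isomorphism), and $u=0_K$, then deducing irreducibility from $\frac{1}{|H|}\sum_{g\in H}|\chi_\gamma(g)|^2=1$. No gaps; this matches the paper's argument step for step.
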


\begin{proof}
Given $b\in A$, we write $\delta_b \in L^2(A)$ for the corresponding point mass. Then $\{\delta_b\}_{b\in A}$ is an orthonormal basis, which we use to compute the trace of $\pi_\gamma(a,\alpha,z)$ for $(a,\alpha,z) \in H$:
\begin{align*}
\chi_\gamma(a,\alpha,z) &= \sum_{b\in A} \langle \pi_\gamma(a,\alpha,z) \delta_b, \delta_b \rangle \\[5 pt]
&= \sum_{b\in A} [ \pi_\gamma(a,\alpha,z) \delta_b ](b) \\[5 pt]
&= \sum_{b\in A} \gamma(z \langle b-\tfrac{1}{2} a, \alpha \rangle ) \delta_b(b-a).
\end{align*}
When $a \neq 0_A$, every term in the sum vanishes, so that $\chi_\gamma(a,\alpha,z) = 0$. On the other hand, when $a = 0_A$, we have
\[ \chi_\gamma(0_A, \alpha, z) = \gamma(z) \sum_{b\in A} \langle b, \gamma \alpha \rangle. \]
Here again, any nonzero choice of $\alpha$ makes $\gamma \alpha \in \hat{A}$ nontrivial, so that $\chi_\gamma(0_A,\alpha,z) = 0$. Thus, $\chi_\gamma(u,z) = 0$ whenever $u\in K$ is nonzero. On other hand, when $u=0_K$, we get
\[ \chi_\gamma(0_A, 0_{\hat{A}}, z) = \gamma(z)|A|. \]

To see that $\pi_\gamma$ is irreducible, we simply compute
\[ \frac{1}{|H|} \sum_{(u,z) \in H} | \chi_\gamma(u,z) |^2 = \frac{1}{|A|^2\cdot \exp(A)} \sum_{z\in C_{\exp(A)}} |\chi_\gamma(0,z)|^2 = \frac{1}{|A|^2\cdot \exp(A)} \sum_{z\in C_{\exp(A)}} |A|^2 = 1. \]
\end{proof}

We are going to construct a representation of $H$ on the space $\HS(L^2(A))$ of operators $T\colon L^2(A) \to L^2(A)$, with the Hilbert-Schmidt inner product $\langle S, T \rangle = \tr(ST^*)$.

\begin{prop}
An orthonormal basis for $\HS(L^2(A))$ is given by $\{ |A|^{-1/2} \cdot \pi_\gamma(u,1) \}_{u \in K}$.
\end{prop}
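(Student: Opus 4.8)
The plan is to exploit a dimension count: since $K = A \times \hat{A}$ has order $|A|^2$, which is exactly $\dim \HS(L^2(A)) = (\dim L^2(A))^2$, it suffices to verify that the rescaled operators $|A|^{-1/2}\pi_\gamma(u,1)$, $u \in K$, are pairwise orthonormal. Orthonormality of $|A|^2$ vectors in an $|A|^2$-dimensional inner product space automatically makes them a basis, so no spanning argument is needed.

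For the orthonormality computation, I would avoid computing matrix entries directly and instead leverage the representation structure to collapse the Hilbert--Schmidt inner product into a single evaluation of the trace character $\chi_\gamma$. Since $\pi_\gamma$ is a unitary representation, for $u,u' \in K$ we have $\pi_\gamma(u',1)^* = \pi_\gamma\bigl((u',1)^{-1}\bigr) = \pi_\gamma(-u',1)$, using the inverse formula $(u,z)^{-1} = (-u,z^{-1})$. Hence
\[ \langle \pi_\gamma(u,1), \pi_\gamma(u',1) \rangle = \tr\bigl( \pi_\gamma(u,1)\pi_\gamma(-u',1) \bigr) = \chi_\gamma\bigl( (u,1)\cdot(-u',1) \bigr). \]

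Next I would evaluate the group product using the Heisenberg law $(u_1,z_1)(u_2,z_2) = (u_1+u_2, z_1 z_2 [u_1,u_2]^{1/2})$, which gives $(u,1)\cdot(-u',1) = (u-u', [u,-u']^{1/2})$. A short check against the definition of the symplectic form shows $[u,-u'] = [u,u']^{-1}$, and that the form is alternating, so $[u,u] = 1$. Feeding this into the character formula \eqref{eq:sc}: when $u \neq u'$ the first coordinate $u-u'$ is nonzero, so $\chi_\gamma = 0$; when $u = u'$ the product is $(0_K, 1)$, whence $\chi_\gamma(0_K,1) = \gamma(1)|A| = |A|$. Thus $\langle \pi_\gamma(u,1), \pi_\gamma(u',1) \rangle = |A|\,\delta_{u,u'}$, and dividing by $|A|$ after the rescaling produces exactly $\delta_{u,u'}$.

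I do not anticipate a genuine obstacle here. The only point requiring care is the bookkeeping with the symplectic form, namely confirming $[u,-u'] = [u,u']^{-1}$ and $[u,u] = 1$ so that the phase factor disappears on the diagonal while the off-diagonal terms are killed purely by the vanishing of $\chi_\gamma$ away from $u = u'$. The conceptual content is simply that translating the orthogonality question into the group algebra reduces it to one evaluation of the Schr\"odinger character, which the preceding proposition has already computed.
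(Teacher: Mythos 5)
Your proposal is correct and takes essentially the same route as the paper's proof: both use unitarity and the Heisenberg group law to collapse $\langle \pi_\gamma(u,1), \pi_\gamma(v,1) \rangle$ into a single evaluation of the trace character $\chi_\gamma$, obtain $|A|\,\delta_{u,v}$, and conclude via the dimension count $|K| = |A|^2 = \dim \HS(L^2(A))$. Your explicit verification that $[u,-u'] = [u,u']^{-1}$ and $[u,u] = 1$ is just a slightly more detailed version of the paper's appeal to the identity $[u,-u] = 1$.
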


\begin{proof}
For $u,v\in K$, we compute
\begin{align*}
\langle \pi_\gamma(u,1), \pi_\gamma(v,1) \rangle &= \tr\left[ \pi_\gamma(u,1)\cdot \pi_\gamma(v,1)^* \right] \\
&= \tr\left[ \pi_\gamma(u,1)\cdot \pi_\gamma(-v,1) \right] \\
&= \tr\left[ \pi_\gamma(u-v,[u,-v]^{1/2}) \right].
\end{align*}
Using the identity $[u,-u] = 1$ and the trace character \eqref{eq:sc}, we see that
\[ \langle \pi_\gamma(u,1), \pi_\gamma(v,1) \rangle = \begin{cases}
|A|, & \text{if }u=v, \\
0, & \text{if }u\neq v.
\end{cases}. \]
Thus, $\{ |A|^{-1/2} \cdot \pi_\gamma(u,1)\}_{u \in K}$ are $|K| = |A|^2$ orthonormal vectors in a space of the same dimension, and therefore form an orthonormal basis.
\end{proof}

Closely related to $\pi_\gamma$ is the representation $\rho_\gamma \colon H \to U(\HS(L^2(A)))$ given by left operator multiplication:
\[ \rho_\gamma(u,z)(T) = \pi_\gamma(u,z)\cdot T \qquad \left( (u,z) \in H,\, T\in \HS(L^2(A)) \right). \]
Since $\pi_\gamma$ is unitary, $\rho_\gamma$ is, too. 

Unlike $\pi_\gamma$, $\rho_\gamma$ is a reducible representation. In particular, let
\[ V_E = \{ f\in L^2(A) : f(-a) = f(a) \text{ for all }a\in A\} \]
and
\[ V_O = \{ f\in L^2(A) : f(-a) = -f(a) \text{ for all }a\in A\} \]
be the spaces of even and odd functions in $L^2(A)$, respectively; orthogonal projections onto these are given by
\[ (P_E f)(a) := \frac{ f(a) + f(-a) }{2}\qquad (f \in L^2(A),\, a \in A) \]
and
\[ (P_O f)(a) := \frac{ f(a) - f(-a) }{2} \qquad (f \in L^2(A),\, a \in A). \]
We define
\[ \H_E = \{ T \in \HS(L^2(A)) : TP_E = T \} \]
and
\[ \H_O = \{ T \in \HS(L^2(A)) : TP_O = T \}. \]
These are easily seen to be invariant under $\rho_\gamma$. We can identify $\H_E \cong \HS(V_E,L^2(A))$ and $\H_O \cong \HS(V_O, L^2(A))$. Consequently,
\[ \dim \H_E = \frac{ |A|(|A|+1) }{2} \qquad \text{and} \qquad \dim \H_O = \frac{ |A| ( |A| - 1 ) }{2}. \]

\begin{theorem} \label{thm:HETF}
The projectively reduced $H$-frames
\[ \Phi_E:= \{ \rho_\gamma(u,1) P_E \}_{u\in K} \]
and
\[ \Phi_O := \{ \rho_\gamma(u,1) P_O \}_{u\in K} \]
are ETFs for $\H_E$ and $\H_O$, respectively. Explicitly, for $u,v \in K$ we have
\begin{equation} \label{eq:PEG}
\langle \rho_\gamma(u,1) P_E,\, \rho_\gamma(v,1) P_E \rangle = \begin{cases}
\frac{ |A| + 1 }{2}, & \text{ if } u=v; \\
\frac{1}{2} \gamma \left( [u,v]^{1/2} \right), & \text{ if } u \neq v;
\end{cases}
\end{equation}
and
\begin{equation} \label{eq:POG}
\langle \rho_\gamma(u,1) P_O,\, \rho_\gamma(v,1) P_O \rangle = \begin{cases}
\frac{ |A| - 1 }{2}, & \text{ if } u=v; \\
-\frac{1}{2} \gamma \left( [u,v]^{1/2} \right), & \text{ if } u \neq v.
\end{cases}
\end{equation}
\end{theorem}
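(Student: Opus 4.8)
The plan is to decouple the tightness of $\Phi_E,\Phi_O$ from the computation of their inner products: tightness I get from a short projection argument, while the equal-norm and equiangularity conditions I read off from \eqref{eq:PEG} and \eqref{eq:POG}. Note first that $\rho_\gamma(u,z)P_E=\pi_\gamma(u,z)P_E=\gamma(z)\,\rho_\gamma(u,1)P_E$, so the sets indexed by $u\in K$ are genuinely projective reductions of the full $H$-orbits. For tightness I would avoid the frame operator entirely and instead use the orthonormal basis $\{|A|^{-1/2}\pi_\gamma(u,1)\}_{u\in K}$ of $\HS(L^2(A))$ established above. The key observation is that right multiplication $T\mapsto TP_E$ is the orthogonal projection of $\HS(L^2(A))$ onto $\H_E$: it is idempotent since $P_E^2=P_E$, its range is exactly $\H_E$, and it is self-adjoint for the Hilbert--Schmidt inner product because $\langle SP_E,T\rangle=\tr[SP_ET^*]=\langle S,TP_E\rangle$. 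Since the image of an orthonormal basis under an orthogonal projection is a Parseval frame for the range, $\{|A|^{-1/2}\pi_\gamma(u,1)P_E\}_{u\in K}$ is Parseval for $\H_E$; hence $\Phi_E$ is tight with frame bound $|A|$. The identical argument with $P_O$ in place of $P_E$ handles $\Phi_O$.

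For the inner products I would collapse the two copies of $P_E$ into one using cyclicity of the trace together with $P_E=P_E^2=P_E^*$ and $\pi_\gamma(v,1)^*=\pi_\gamma(-v,1)$:
\[
\langle \rho_\gamma(u,1)P_E,\rho_\gamma(v,1)P_E\rangle=\tr\!\left[\pi_\gamma(u,1)P_E\pi_\gamma(-v,1)\right]=\tr\!\left[\pi_\gamma(u-v,[u,v]^{1/2})\,P_E\right],
\]
where the last equality multiplies out $\pi_\gamma(-v,1)\pi_\gamma(u,1)=\pi_\gamma(u-v,[u,v]^{1/2})$ via the group law together with the symplectic identities $[-v,u]=[u,v]$ and $[u,u]=1$. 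Writing $P_E=\tfrac12(I+R)$ with $R$ the reflection $(Rf)(b)=f(-b)$, I split this into $\tfrac12\tr[\pi_\gamma(w,z)]+\tfrac12\tr[\pi_\gamma(w,z)R]$ with $w=u-v$ and $z=[u,v]^{1/2}$. The first term is the trace character \eqref{eq:sc}, contributing $\gamma(z)|A|=|A|$ exactly when $u=v$ (so $w=0$, $z=1$) and $0$ otherwise.

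I expect the reflected term $\tr[\pi_\gamma(w,z)R]$ to be the main obstacle, since it is what produces the extra $+1$ and $-1$ in the diagonals of \eqref{eq:PEG} and \eqref{eq:POG}. Computing it in the point-mass basis with $R\delta_b=\delta_{-b}$, the entry $[\pi_\gamma(a,\alpha,z)\delta_{-b}](b)$ is supported on $b-a=-b$, i.e.\ on the single index $b=\tfrac12 a$; here the odd order of $A$ is essential, as it guarantees $\tfrac12 a$ exists and is unique. At that index the phase collapses to $\gamma(z\langle\tfrac12 a-\tfrac12 a,\alpha\rangle)=\gamma(z)$, giving $\tr[\pi_\gamma(w,z)R]=\gamma(z)$ independently of $w$. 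Substituting back yields $\tr[\pi_\gamma(w,z)P_E]=\tfrac12\bigl(\gamma(z)|A|\cdot\mathbf{1}_{w=0}+\gamma(z)\bigr)$, which is exactly \eqref{eq:PEG}; the sign change in $P_O=\tfrac12(I-R)$ gives \eqref{eq:POG}. Finally I would assemble the ETF conclusion: the diagonals $\tfrac{|A|\pm1}{2}$ give equal norms, the off-diagonals have constant modulus $\tfrac12$ since $\gamma$ is unitary, and tightness was already established; as a consistency check, the resulting unit-norm coherence $1/(|A|+1)$ meets the Welch bound for $|A|^2$ vectors in dimension $|A|(|A|\pm1)/2$.
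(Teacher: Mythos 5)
Your proposal is correct and takes essentially the same route as the paper's proof: both hinge on the decomposition $P_{E}=\tfrac12(I+R)$, $P_{O}=\tfrac12(I-R)$, the trace character \eqref{eq:sc}, the key computation $\tr[\pi_\gamma(a,\alpha,z)R]=\gamma(z)$ via the unique solution $b=\tfrac12 a$ of $b-a=-b$ (where odd order enters), and tightness via the orthonormal basis $\{|A|^{-1/2}\pi_\gamma(u,1)\}_{u\in K}$. Your departures are only cosmetic: you obtain the reduced Gram entries directly by cyclicity of the trace rather than through the function of positive type $\varphi_E$ on the full orbit combined with \eqref{eq:PS} and Proposition~\ref{prop:PS}, and you package tightness as ``right multiplication by $P_E$ is the Hilbert--Schmidt orthogonal projection onto $\H_E$, and the image of an orthonormal basis under an orthogonal projection is Parseval,'' which is the paper's $TP_E=T$ computation in conceptual disguise.
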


\begin{proof}
We will prove the theorem for $\Phi_E$ first. Let $R \colon L^2(A) \to L^2(A)$ be the reversal operator given by $(Rf)(a) = f(-a)$ for $f\in L^2(A)$ and $a\in A$. It is easy to see that $R^2=I$, that $R=R^*$, and that $P_E = \frac{1}{2}(I+R)$. If $\Phi_E':=\{ \rho_\gamma(u,z) P_E \}_{(u,z) \in H}$ is the full orbit of $P_E$ under $\rho_\gamma$, then its function of positive type $\varphi_E \colon H \to \C$ satisfies, for any $(u,z) \in H$,
\begin{align}
\varphi_E(u,z) &= \langle P_E,\, \rho_\gamma(u,z) P_E \rangle \label{eq:phiE1} \\[5 pt]
&= \frac{1}{4} \left( \langle I,\, \rho_\gamma(u,z)I \rangle + \langle I,\, \rho_\gamma(u,z) R \rangle + \langle R,\, \rho_\gamma(u,z) I \rangle + \langle R,\, \rho_\gamma(u,z) R \rangle \right) \notag \\[5 pt]
&= \frac{1}{2} \left(  \tr \left[ \pi_\gamma\left(-u,z^{-1}\right)\right] + \tr \left[ R \pi_\gamma\left(-u,z^{-1}\right) \right ] \right). \notag
\end{align}
On the other hand, whenever $a\in A$, $\alpha \in \hat{A}$, and $z \in C_{\exp(A)}$,
\begin{align*}
\tr \left[ R \pi_\gamma(a,\alpha,z) \right] &= \sum_{b\in A} \langle R \pi_\gamma(a,\alpha,z) \delta_b, \delta_b \rangle \\[5 pt]
&= \sum_{b\in A} [ R \pi_\gamma(a,\alpha,z) \delta_b ](b) \\[5 pt]
&= \sum_{b\in A} \gamma \left( z \langle -b - \tfrac{1}{2} a, \alpha \rangle \right) \delta_b(-b-a).
\end{align*}
For any $b \in A$, we have $b = -b-a$ if and only if $b = -\frac{1}{2} a$. Thus,
\[ \tr \left[ R \pi_\gamma(a,\alpha,z) \right] = \gamma\left(z  \langle \tfrac{1}{2} a - \tfrac{1}{2} a, \alpha \rangle \right) = \gamma(z). \]
Substituting this and \eqref{eq:sc} into \eqref{eq:phiE1}, we obtain
\begin{equation} \label{eq:phiE2}
\varphi_E(u,z) = \begin{cases}
\frac{1}{2}\gamma(z^{-1}) \left( |A| + 1 \right), & \text{if }u=0 \\
\frac{1}{2} \gamma(z^{-1}), & \text{if }u \neq 0
\end{cases}
\qquad \left( (u,z) \in H \right).
\end{equation}

By \eqref{eq:PS}, the projective stabilizer of $P_E$ is $\{0\} \times C_{\exp(A)} \subset H$, so $\Phi_E$ is a projective reduction of $\Phi_E'$ by Proposition~\ref{prop:PS}. The formula \eqref{eq:PEG} for the inner products of $\Phi_E$ follows from \eqref{eq:phiE2}, since for any $u,v \in K$,
\begin{align*}
\langle \rho_\gamma(u,1) P_E,\, \rho_\gamma(v,1) P_E \rangle &= \langle P_E,\, \rho_\gamma[(-u,1)\cdot (v,1)] P_E \rangle \\
&= \langle P_E,\, \rho_\gamma(v-u,[-u,v]^{1/2}) P_E \rangle \\
&= \varphi_E(v-u,[u,v]^{-1/2}).
\end{align*}
In particular, $\Phi_E$ is equiangular.

Finally, if $T \in \H_E$ is arbitrary, then
\[ \sum_{u\in K} \left| \langle T,\, \rho_\gamma(u,1) P_E \rangle \right|^2 = \sum_{u\in K} \left| \tr \left[ T P_E \pi_\gamma(u,1)^* \right] \right|^2. \]
Since $TP_E = P_E$, we have
\[ \sum_{u\in K} \left| \langle T,\, \rho_\gamma(u,1) P_E \rangle \right|^2 = \sum_{u\in K} \left| \tr \left[ T \pi_\gamma(u,1)^* \right] \right|^2 = \sum_{u\in K} \left| \langle T,\, \pi_\gamma(u,1) \rangle \right|^2, \]
and since $\{ |A|^{-1/2} \pi_\gamma(u,1) \}_{u\in K}$ is an orthonormal basis,
\[ \sum_{u\in K} \left| \langle T,\, \rho_\gamma(u,1) P_E \rangle \right|^2 = |A|\cdot \Norm{T}^2. \]
Thus, $\Phi_E$ is an equiangular tight frame with frame constant $|A|$.

The proof for $\Phi_O$ proceeds similarly, using the identity $P_O = \frac{1}{2}(I-R)$. In this case, the $H$-frame $\Phi_O':= \{ \rho_\gamma(u,z) P_O \}_{(u,z) \in H}$ has function of positive type $\varphi_O \colon H \to C$ given by
\begin{equation} \label{eq:phiO}
\varphi_O(u,z) = \begin{cases}
\frac{1}{2}\gamma(z^{-1}) \left( |A| - 1 \right), & \text{if }u=0 \\
-\frac{1}{2} \gamma(z^{-1}), & \text{if }u \neq 0
\end{cases}
\qquad \left( (u,z) \in H \right).
\end{equation}
Just as above, $\Phi_O$ is a projective reduction of $\Phi_O'$.
\end{proof}

\begin{rem}
In the proof above, the functions of positive type $\varphi_E$ and $\varphi_O$ are invariant under the action of $\Sp(K)$ on $H$, in the sense that
\[ \varphi_E(\sigma(u), z) = \varphi_E(u,z) \qquad \text{and} \qquad \varphi_O(\sigma(u), z) = \varphi_O(u,z) \]
for all $(u,z) \in H$ and $\sigma \in \Sp(K)$. By Corollary~\ref{cor:milk}, $\Phi_E$ and $\Phi_O$ are projective reductions of $({H\rtimes \Sp(K)}, \linebreak[0] {\Sp(K)})$-frames. There are only finitely many of these up to rescaling, as a consequence of Theorem~\ref{thm:spgp}. That finiteness was instrumental to the authors' discovery of Theorem~\ref{thm:HETF}, in that it made it possible to compute all tight $({H\rtimes \Sp(K)}, \linebreak[0] {\Sp(K)})$-frames for a small instance of $H$, and simply observe that one of these was an ETF.
\end{rem}

\section*{Acknowledgments}

Part of this research was conducted during the SOFT 2017: Summer of Frame Theory workshop at the Air Force Institute of Technology. Financial support was provided in part by NSF DMS 1321779, ARO W911NF-16-1-0008, AFOSR F4FGA06060J007, AFOSR Young Investigator Research Program award F4FGA06088J001, and by the Air Force Summer Faculty Fellowship Program. The authors thank Matt Fickus for ongoing, stimulating conversations.
The views expressed in this article are those of the authors and do not reflect the official policy or position of the United States Air Force, Army, Department of Defense, or the U.S.\ Government.


\bibliographystyle{abbrv}
\bibliography{gelfand_pairs}

\end{document}